\documentclass{proc-l}
\usepackage{graphicx}
\usepackage{amssymb,verbatim}


\usepackage[onehalfspacing]{setspace}

\usepackage{prettyref}

\newtheorem{thm}{Theorem}
\newtheorem{lem}[thm]{Lemma}

\newtheorem{cor}[thm]{Corollary}
\theoremstyle{definition}

\newtheorem{defn}[thm]{Definition}
\newrefformat{defn}{Definition \ref{#1}}
\newtheorem{remark}[thm]{Remark}
\newtheorem{exm}[thm]{Example}


\newcommand{\nats}{ {\mathbb N} }
\newcommand{\complex}{ {\mathbb C} }
\newcommand{\Disk}{ {\mathbb D} }

\newcommand{\fs}{\ensuremath{\stackrel{\mathrm{FS}}{\sim}}}

\newcommand{\ol}{\overline}
\newcommand{\sm}{\setminus}

\newcommand{\usc}{upper semi-continuous}
\newcommand{\FS}{finitely Suslinian}
\newcommand{\LC}{locally connected}

\newcommand{\A}{\mathcal{A}}

\newcommand{\B}{\mathcal{B}}

\newcommand{\M}{\mathcal{M}}
\newcommand{\fm}{\mathcal{FM}}

\newcommand{\C}{\mathbb{C}}

\newcommand{\ucirc}{\mathbb{S}^1}
\newcommand{\uc}{\mathbb{S}^1}

\newcommand{\bd}{\mbox{Bd}}

\newcommand{\e}{\varepsilon}
\newcommand{\al}{\alpha}

\newcommand{\ph}{\varphi}
\newcommand{\be}{\beta}

\newcommand{\vp}{\varphi}

\newcommand{\imp}{\mbox{Imp}}

\newcommand{\disk}{\mathbb{D}}

\newcommand{\iy}{\infty}


\usepackage[usenames]{color}
\usepackage{verbatim}

\begin{document}
\title[Finitely Suslinian models for planar compacta]
{Finitely Suslinian models for planar compacta with applications to Julia sets}
\author{Alexander Blokh}
\thanks{The first author was partially
supported by NSF grant DMS-0901038.}
\address[A.~Blokh] {Department of Mathematics\\ University of Alabama at
Birmingham\\ Birmingham, AL 35294-1170}
\email[A.~Blokh]{ablokh@math.uab.edu}
\author{Clinton Curry}
\thanks{The second author was partially supported by NSF grant DMS-0353825.}
\address[C.~Curry] 
{Institute for Mathematical Sciences\\
Stony Brook University\\
Stony Brook, NY 11794}
\email[C.~Curry]{clintonc@math.sunysb.edu}
\author{Lex Oversteegen}
\thanks{The third author was partially  supported
by NSF grant DMS-0906316.}
\address[L.~Oversteegen]{Department of Mathematics\\ University of Alabama
at Birmingham\\Birmingham, AL 35294-1170}
\email[L.~Oversteegen]{overstee@math.uab.edu}
\date{September 7, 2010; revised August 22, 2011}

\subjclass[2010]{Primary 54F15; Secondary 37B45, 37F10, 37F20}

\keywords{Continuum, \FS, \LC, monotone map, Julia set}

\begin{abstract}
  A compactum $X\subset \C$ is \textbf{unshielded} if it
  coincides with the boundary of the unbounded component of $\C\sm
  X$. Call a compactum $X$ \textbf{\FS{}} if every
  collection of pairwise disjoint subcontinua of $X$ whose diameters
  are bounded away from zero is finite.  We show that any unshielded
  planar compactum $X$ admits a topologically unique monotone map
  $m_X:X \to X_{FS}$ onto a \FS{} quotient such that any monotone
  map of $X$ onto a \FS{} quotient factors through $m_X$.  We call
  the pair $(X_{FS},m_X)$ (or, more loosely, $X_{FS}$) the
  \textbf{finest \FS{} model of $X$}.

  If $f:\C\to \C$ is a branched covering map and $X \subset \C$ is a
  fully invariant compactum, then the appropriate extension $M_X$ of
  $m_X$ monotonically semiconjugates $f$ to a branched covering map
  $g:\C\to \C$ which serves as a model for $f$. If $f$ is a
  polynomial and $J_f$ is its Julia set, we show that $m_X$ (or
  $M_X$) can be defined on each component $Z$ of $J_f$ individually
  as the finest monotone map of $Z$ onto a locally connected
  continuum.
\end{abstract}

\maketitle

\section{Introduction}

For us, a \textbf{compactum} is a non-empty compact metric space. A
compactum is \textbf{degenerate} if all of its components are points. A
\textbf{continuum} is a connected compactum. One way of describing the
topology of a compactum $X$ is by constructing a \emph{model} for it,
i.e. a compactum $Y$, simpler to describe than $X$, and a (monotone)
onto map $m:X\to Y$ (a continuous onto map $m$ is \textbf{monotone} if
all $m$-preimages of continua are continua; we denote the family of all
monotone maps by $\M$). If $X$ carries an additional structure, it is
nice if the map $m$ preserves that structure (e.g., if there is a continuous map
$f:X \to X$, the map $m$ should be chosen so that $f$ induces a
continuous self-map on $Y$ by $m(x) \mapsto m(f(x))$). In this case $m$ is said
to be a \textbf{monotone semiconjugacy} of the map $f:X \to X$ to the
induced map $g:Y \to Y$ (if $m$ is a homeomorphism, it is called a
\textbf{conjugacy}).

Unless specified otherwise, \emph{from now on all compacta we
consider are planar}. A case of particular interest is when $X$ is
\textbf{unshielded}, i.e. $X\subset \C$ is the boundary of the
unbounded component $U_\infty(X)$ of $\C \setminus X$.  The
following construction is due to Carath\'eodory.  Recall that a
space is \textbf{locally connected} if its topology has a basis of
connected sets. If $X$ is an unshielded continuum, then
$U_\infty(X)\cup \{\infty\}$ is a simply connected open set in the
Riemann sphere. Take the unique Riemann map $\varphi_X:\disk \to
U_\infty(X)$ with positive derivative at the origin (here $\disk$ is
the unit open disk centered at the origin). If $X$ is locally
connected, we may extend $\varphi_X$ continuously to
$\overline{\disk}$, mapping $\ucirc$ onto $X$ (here $\ucirc$ is the
boundary of $\disk$). Declaring points $u, v\in \uc$ equivalent if
and only if $\vp_X(u)=\vp_X(v)$ and denoting this equivalence
relation by $\approx$, we see that $X$ is homeomorphic to the
quotient space $\uc/\approx$.  Equivalence relations $\approx$ which
arise in this way are called \textbf{laminations}.  If $J_P$ is the
locally connected Julia set of a polynomial $P$, then $\varphi_X$
semiconjugates $z^d|_{\ucirc}$ to a map $\widetilde{P}:\uc/\approx
\to \uc/\approx$.

A lamination can be defined in abstract circumstances as a closed
equivalence relation $\approx$ on $\ucirc$ such that convex hulls of
$\approx$-classes are pairwise disjoint (here the \emph{convex hull}
of a set $T$ is the smallest convex set containing $T$). Laminations
therefore capture the external ray picture of unshielded continua.
In order to model dynamical objects like the Julia set of a degree
$d$ polynomial, we may require that $\approx$ is
\textbf{$d$-invariant}. This means that the image of a
$\approx$-class under the angle $d$-tupling map is again a
$\approx$-class, and classes map to each other in a
consecutive-preserving way (loosely speaking, preserving the order
of points on the circle).

There are even laminations for disconnected Julia sets; here
$\approx$ is a closed equivalence relation defined on a Cantor
subset $A \subset \ucirc$, and the angle $d$-tupling map is replaced
by a covering self-map of $A$.  This models that, for a polynomial
$P$ with disconnected Julia set $J_P$, the neighborhood of $\infty$
on which $P$ is conjugate to $z \mapsto z^d$ does not include the
entire basin of infinity.  In this case every external ray can be
analytically continued until it runs into the Julia set unless it
first runs into the preimage of an escaping critical point. In such
a case, one can take left- and right-sided limits of fully-defined
external rays and define two external rays corresponding to the same
angle. These angles are associated to (pre)critical points and to
the gaps in the Cantor set $A$ (see \cite{gm93, kiw04, lp96}).

By Kiwi \cite{kiw04} laminations correspond to a wider class of
polynomials $P$, whose Julia sets may not be locally connected nor
connected. More precisely, an $n$-periodic point $a$ of $P$ is
called \textbf{irrationally neutral} if $(P^n)'(a)=e^{2\pi i\al}$
with $\al$ irrational. Also, given a lamination $\approx$ of $\uc$,
call a set $F\subset \uc$ \textbf{$\approx$-saturated} if it is a
union of a collection of $\approx$-classes. By \cite{kiw04}, to
every polynomial $P$ without irrationally neutral cycles we can
associate a lamination $\approx$, a closed $\approx$-saturated set
$F\subset \uc$ and a monotone map $m:J_P\to F/\approx$ such that $m$
is a semiconjugacy of $P|_{J_P}$ with an appropriately constructed
map $f:F/\approx \to F/\approx$ (in the case that $J_P$ is
connected, then $F=\uc$ and $f$ is a map induced on $\uc/\approx$ by
$z^d$).

The present authors prove \cite{bco08} that every complex polynomial
$P$ with \emph{connected} Julia set has a unique ``best'' lamination.
This generalizes \cite{kiw04}, albeit for connected Julia sets, by
allowing $P$ to have irrationally neutral cycles. The lamination
$\approx$ comes with a monotone semiconjugacy $m:J_P \to \ucirc /
\approx$ which has the property of being the \emph{finest monotone map
of $J_P$ onto a locally connected continuum} (defined in the next
section). In \cite{bco08} we also provide a criterion for $\approx$ to
have more than one equivalence class (equivalently, for $J_P$ to have a
non-degenerate locally connected monotone image).

A compactum $X$ is called \textbf{finitely Suslinian} if, for every
$\e > 0$, every collection of disjoint subcontinua of $X$ with
diameters at least $\e$ is finite. By Lemma 2.9 \cite{bo04},
\emph{unshielded planar \LC~ continua are \FS{} and vice
  versa}\footnote{If $X$ is not unshielded, this may fail as the
  closed unit disk $\overline{\disk}$ is \LC{} but not \FS;
  Example~\ref{lcnotfs} shows that there are nowhere dense \LC{}
  planar continua which are not \FS.}. Thus, in the unshielded case
the notion of finitely Suslinian generalizes the notion of local
connectivity. There is another analogy to local connectivity too: by
Theorem 1.4 \cite{bmo07}, \emph{for an unshielded \FS{} compactum
  $X\subset \C$ there exists a lamination $\approx$ of a closed set $F
  \subset \uc$ such that $X$ is homeomorphic to $F / \approx$}. This
motivates us to extend onto \FS{} compacta some results for \LC{}
continua and to look for good \FS{} models of planar compacta. We need
the following definition which applies to arbitrary maps (as customary
in topology, by a map we always mean a \emph{continuous} map).

\begin{defn}[Finest models]\label{def:finmod}
  Let $X \subset \complex$ be a compactum, P be a topological property
  (P could be the property of being locally connected, Hausdorff, etc)
  and $\B$ be a class of maps with domain $X$. The \textbf{finest
    $\B$-model of $X$ with property P} is an onto map $\psi^P:X \to
  Y,\ \psi^P\in \B$ where $Y$ is a topological space with property
  $P$ such that any other map $\vp^P:X \to Z,\ \vp^P\in \B$ onto a
  space $Z$ with property $P$ can be written as the composition $g
  \circ \psi^P$ for some map $g:Y \to Z$.
\end{defn}

Though we give the definition for any class $\B$, we are mostly
interested in the class $\M$ of monotone maps because such maps do not
change the structure of $X$ too drastically; besides, we study planar
compacta, and monotone maps of planar compacta with non-separating
fibers keep them planar \cite{moo62}. In the monotone case we will use
notation $m^P$ instead of $\psi^P$ (or just $m$ if the property $P$ is
fixed). In fact, in the monotone case this concept of finest map has
been studied before in the context of continua (cf
\cite{Swingle:Core}).

\begin{lem}\label{finunik}
If the finest $\B$-model with property $P$ exists, then it is unique up
to a homeomorphism.
\end{lem}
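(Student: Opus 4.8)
The plan is to run the standard ``universal object is unique'' argument, using surjectivity to cancel maps. Suppose both $\psi_1:X\to Y_1$ and $\psi_2:X\to Y_2$ are finest $\B$-models of $X$ with property $P$. First I would apply the defining universal property of $\psi_1$ to the map $\psi_2$ (which lies in $\B$ and has image with property $P$): this yields a map $g:Y_1\to Y_2$ with $\psi_2=g\circ\psi_1$. Symmetrically, applying the universal property of $\psi_2$ to $\psi_1$ gives a map $h:Y_2\to Y_1$ with $\psi_1=h\circ\psi_2$.

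Next I would combine these to get $\psi_1=h\circ g\circ\psi_1$ and $\psi_2=g\circ h\circ\psi_2$. The key step is to upgrade these identities from ``agree after precomposition with $\psi_i$'' to genuine equalities $h\circ g=\id_{Y_1}$ and $g\circ h=\id_{Y_2}$: this is exactly where I use that $\psi_1$ and $\psi_2$ are \emph{onto}. Indeed, given $y\in Y_1$, pick $x\in X$ with $\psi_1(x)=y$; then $h(g(y))=h(g(\psi_1(x)))=\psi_1(x)=y$, so $h\circ g=\id_{Y_1}$, and likewise $g\circ h=\id_{Y_2}$. Hence $g$ is a continuous bijection with continuous inverse $h$, i.e.\ a homeomorphism, and $g\circ\psi_1=\psi_2$, so the two models coincide up to homeomorphism.

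I do not anticipate a real obstacle here; the argument is purely formal and uses nothing about the planar or monotone setting beyond what is packaged into Definition~\ref{def:finmod}. The only point requiring a little care is the cancellation step: one must not assert $h\circ g=\id$ directly from $h\circ g\circ\psi_1=\psi_1$ without invoking surjectivity of $\psi_1$ (this is the same reason epimorphisms are needed for uniqueness of universal arrows in category theory). A secondary bookkeeping remark worth including is that $g$ is automatically onto because $\psi_2=g\circ\psi_1$ is onto, and injective because it has a left inverse $h$; stating this makes the homeomorphism conclusion transparent without any separate point-set topology input.
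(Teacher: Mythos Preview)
Your argument is correct and mirrors the paper's proof almost exactly: both factor each finest model through the other to obtain maps $g,h$ and then check $h\circ g=\id$ and $g\circ h=\id$. You are slightly more explicit about invoking surjectivity of the $\psi_i$ to justify the cancellation, which the paper leaves implicit, but the approach is the same.
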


\begin{proof}
Suppose that $m_1:X \to Y_1$ and $m_2:X \to Y_2$ are finest
$\B$-models. Then, by definition, we may factor $m_1$ as
\[ m_1:X \stackrel{m_2}\to Y_2 \stackrel{g_1}{\to} Y_1 \] and
similarly factor the constituent map $m_2$ to obtain \[m_1:X
\stackrel{m_1}{\to} Y_1 \stackrel{g_2}\to Y_2 \stackrel{g_1}{\to}
Y_1.\] However, since the composition is itself equal to $m_1$, we
find that $g_1:Y_2 \to Y_1$ and $g_2:Y_1 \to Y_2$ are each other's
inverse and hence homeomorphisms.  Therefore, $Y_2 = g_2(Y_1)$ is
homeomorphic to $Y_1$, $m_2 = g_2 \circ m_1$, and $m_1 = g_1 \circ
m_2$.
\end{proof}

The following notion is a bit weaker than that defined in Definition~\ref{def:finmod}.

\begin{defn}[Top models]\label{def:topmod}
Let $h:X\to Y$ be a map in $\B$ onto a compactum $Y$ with property P
such that there exists no map $h':X\to Y'$ onto a compactum $Y'$ with
property P which refines $h$ (i.e., if a map $h''$ is such that
$h=h''\circ h'$, then $h''$ must be a homeomorphism and $Y'$ is
homeomorphic to $Y$). Then $(Y, h)$ is said to be a \textbf{top
  $\B$-model of $X$ with property P}.
\end{defn}

Observe that, while the finest model is finer than
all others, a top model does not have another strictly finer
model. This is the same as the greatest model and a maximal model in the sense of
some partial order.
Hence, if the finest $\B$-model of $X$ with property P exists,
it is the unique top $\B$-model of $X$ with property P. So, if we have
a top $\B$-model $h:X\to Y$ of $X$ with property P and a $\B$-model
$h':X\to Y'$ of $X$ with property P such that $h$ is not finer than
$h'$, then the finest $\B$-model of $X$ with property P does not
exist.  Example~\ref{lcnotfs} provides a planar continuum $X_1$ with
this situation in the case when P is the property of being finitely
Suslinian and $\B$ is either the class of continuous maps or monotone
maps; thus, $X_1$ has no finest continuous or monotone model with
\FS{} property.


Also, by the definitions if $\B\subset \B'$ are two
classes of maps and the finest (top) $\B'$-model of $X$ with property
P is $(Y, h)$ where $h$ happens to belong to the smaller class $\B$,
then $(Y, h)$ is also the finest (top) $\B$-model of $X$ with property P.

The purpose of the paper is to prove Theorems~\ref{thm:finmod},
~\ref{thm:branchcover} and ~\ref{thm:julifs}. As
Theorem~\ref{thm:finmod} proves, the situation with
\emph{unshielded} planar continua and the \FS{} property is better.
From now on finest $\M$-models with \FS{} property will be called
\emph{finest \FS{} monotone models.}

\begin{thm}\label{thm:finmod}
Every unshielded compactum $X$ has a finest \FS{} monotone model
$m_X:X\to X_{FS}$.
\end{thm}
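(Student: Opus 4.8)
The natural strategy is to exhibit $X_{FS}$ as the quotient of $X$ by the intersection (in the lattice of upper semi-continuous decompositions) of all monotone decompositions of $X$ with finitely Suslinian quotient, and then to check that this intersection itself has finitely Suslinian quotient. Concretely, I would let $\mathcal D$ be the collection of all monotone upper semi-continuous decompositions $\mathcal G$ of $X$ such that $X/\mathcal G$ is \FS{}, and form the decomposition $\mathcal G_0$ whose elements are the connected components of sets of the form $\bigcap_{\mathcal G\in\mathcal D} G_x(\mathcal G)$, where $G_x(\mathcal G)$ is the element of $\mathcal G$ containing $x$. The first task is to verify that $\mathcal G_0$ is a well-defined monotone \usc{} decomposition; this is a standard argument, using that an intersection of \usc{} decompositions refines to a \usc{} decomposition and that monotonicity (all fibers connected) is preserved by taking components. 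The quotient map $m_X\colon X\to X/\mathcal G_0=:X_{FS}$ is then monotone, and by construction every monotone $\varphi\colon X\to Z$ with $Z$ \FS{} factors through $m_X$ via the map induced on quotients.

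The crux of the proof — and the main obstacle — is showing that $X_{FS}$ is itself finitely Suslinian, i.e.\ that the ``infimum'' of the family $\mathcal D$ stays inside $\mathcal D$. This is where unshieldedness and planarity must be used, since the analogous statement fails without it (the closed disk, as the footnote notes, is \LC{} but not \FS{}, and Example~\ref{lcnotfs} shows the finest \FS{} model can genuinely fail to exist for general continua). I would exploit the external structure: for an unshielded continuum the Riemann map / prime end theory gives a handle, but more robustly I would use Theorem 1.4 of \cite{bmo07}, which says an unshielded \FS{} compactum is $F/\approx$ for a lamination $\approx$ on a closed $F\subset\uc$. The plan is to translate the decomposition-theoretic problem into one about laminations: each $\mathcal G\in\mathcal D$ corresponds (after composing with the Carath\'eodory-type semiconjugacy) to a lamination, and these laminations are ordered by inclusion; their closure of the union is again a lamination (closedness of the equivalence relation is the point, and it follows because convex hulls of classes remain pairwise disjoint in the limit — compactness of the space of chords). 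Pulling this ``largest'' lamination back to $X$ gives a monotone decomposition with \FS{} quotient that refines every member of $\mathcal D$, hence equals $\mathcal G_0$, proving $X_{FS}$ is \FS{}.

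Two reductions make this manageable. First, I would first treat the case where $X$ is an unshielded \emph{continuum} (connected), handle the lamination argument there, and then reduce the general unshielded compactum to its components: an unshielded compactum has at most countably many non-degenerate components, each of which is an unshielded continuum in its own filled hull, and one assembles the component-wise finest \FS{} models, checking that the diameters of the non-degenerate components tend to zero so that the assembled quotient is globally \FS{} and the assembled map is monotone and \usc. Second, rather than manipulating laminations directly one can argue intrinsically: a monotone image of an unshielded compactum with non-separating fibers is again planar and unshielded \cite{moo62}, so \FS{} is equivalent to \LC{} for the quotient by Lemma 2.9 of \cite{bo04}, and one can instead run the whole argument with the property ``\LC'' — for which the finest monotone model is already known to exist in the continuum case (this is essentially the content of \cite{bco08} combined with classical Jones/Whyburn theory, cf.\ \cite{Swingle:Core}). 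Either way, uniqueness of the resulting model is immediate from Lemma~\ref{finunik}. The one genuine difficulty to watch is the interplay between ``monotone'' and ``\usc'' when passing to the limiting decomposition: I expect the key technical lemma to be that the closure of an increasing union of monotone \usc{} decompositions of an unshielded planar compactum, after splitting into components, is still monotone and \usc{} with \FS{} quotient, and that is the step I would write out in full detail.
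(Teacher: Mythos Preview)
Your strategy---take the lattice infimum of all monotone \usc{} decompositions with \FS{} quotient and prove the infimum stays in the class---is a reasonable outline, but the two routes you propose for the crux step both have genuine problems, and the paper takes a different and more direct path.

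The component-wise reduction is simply wrong. Consider the unshielded compactum $X=\{0\}\times[0,1]\cup\bigcup_{n\ge 1}\{1/n\}\times[0,1]$. Each component is an arc, hence already \FS{}, so your assembled component-wise model is the identity; but $X$ is not \FS{}, since $\{0\}\times[0,1]$ is a non-degenerate limit continuum. Your claim that ``the diameters of the non-degenerate components tend to zero'' is false for unshielded compacta, and the assembled quotient need not be \FS{}. This is precisely the phenomenon the paper isolates later as $\sim_{\blacktriangle,X}\neq\fs_X$ (Example~\ref{simnotfs}); it is special to polynomial Julia sets (Theorem~\ref{naifs}) that the component-wise and global models agree, and that requires substantial extra input.

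The lamination route is too vague to carry the argument. Theorem~1.4 of \cite{bmo07} gives a lamination for an unshielded \FS{} compactum, but each quotient $X/\mathcal G$ produces a lamination on its own closed set $F_{\mathcal G}\subset\uc$, and you have not explained how to compare or take a union of laminations living on different underlying sets; nor is it clear what ``the Carath\'eodory-type semiconjugacy'' means for a disconnected unshielded $X$ that is not itself \FS{}. The appeal to \cite{bco08} for a pre-existing finest \LC{} model of an arbitrary unshielded continuum is also not justified by that paper, which treats connected polynomial Julia sets.

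The paper avoids all of this by working from the other end: it identifies the concrete obstruction---\emph{limit continua}---and defines $\fs$ as the finest closed equivalence relation respecting them. That $X/\fs$ is \FS{} holds for any compactum (Lemma~\ref{lem:ffs}, no unshieldedness needed). Unshieldedness enters only to prove that every monotone map to a \FS{} space must collapse every limit continuum (Lemma~\ref{finmocol}), and this is where the real geometric content lives: a planar $\theta$-configuration lemma (Lemma~\ref{lem:notheta}) forces the collapse. With that lemma in hand, $X/\fs$ is automatically finer than every competitor, hence the finest model. If you want to salvage your infimum approach, the missing ingredient is exactly an analogue of Lemma~\ref{finmocol}; once you have it, the infimum and the paper's $\fs$ coincide and the argument closes.
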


This yields applications to the dynamics of branched covering maps of
the plane, and in particular the study of Julia sets of polynomials,
which are naturally occurring examples of unshielded compacta.

\begin{thm}\label{thm:branchcover}
  Suppose that $f:\C\to \C$ is a branched covering map and $X$ is an
  unshielded compactum which is fully invariant under $f$. Then
  $X_{FS}$ can be embedded into the plane and the finest \FS{}
  monotone model $m_X:X\to X_{FS}$ can be extended to the plane in
  such a way that the resulting map $M_X:\C\to \C$ semiconjugates
  $f$ and a branched covering map $g:\C \to \C$.
\end{thm}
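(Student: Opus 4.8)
The plan is to build $M_X$ by extending the decomposition defining $m_X$ to a decomposition of the whole plane and then quoting a classical result of R.\ L.\ Moore on upper semi-continuous decompositions. First I would recall that, by Theorem~\ref{thm:finmod}, $m_X$ is the quotient map associated to a monotone (hence upper semi-continuous with connected elements) decomposition $\CC_X$ of $X$. Since $X$ is unshielded and the fibers of $m_X$ are non-separating subcontinua of $X$ (if some fiber separated the plane, it would carry complementary bounded components whose boundaries are subcontinua of $X$, and one could argue this contradicts either finiteness-Suslinian-ness of the image or the finest property), one extends $\CC_X$ to a decomposition $\widehat{\CC}_X$ of $\C$ by adding all singletons $\{z\}$ for $z \notin X$. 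This enlarged decomposition is again upper semi-continuous, its non-degenerate elements are non-separating continua, and so by Moore's theorem \cite{moo62} the quotient $\C/\widehat{\CC}_X$ is homeomorphic to $\C$; call the quotient map $M_X:\C\to\C$. By construction $M_X|_X = m_X$ (after identifying $\C/\widehat{\CC}_X$ with $\C$) and $M_X(X) = X_{FS}$, which gives the embedding of $X_{FS}$ into the plane asserted in the theorem.

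Next I would produce the induced map $g$. Because $X$ is fully invariant under $f$ (so $f(X)=X=f^{-1}(X)$) and $m_X$ is the \emph{finest} \FS{} monotone model, the composition $m_X\circ f|_X : X \to X_{FS}$ is a monotone map onto a \FS{} compactum, hence factors as $g_0 \circ m_X$ for a (unique, continuous) map $g_0 : X_{FS}\to X_{FS}$; this is the standard way semiconjugacies are extracted from finest models, and the same factorization shows $g_0$ does not depend on choices. The remaining task is to extend $g_0$ to a branched covering $g:\C\to\C$ with $M_X\circ f = g\circ M_X$ on all of $\C$. The natural definition is $g = M_X\circ f\circ M_X^{-1}$ wherever $M_X$ is injective, i.e.\ off the (countably many) non-degenerate fibers; one must check this is well defined, i.e.\ that $f$ maps each fiber of $M_X$ into a single fiber of $M_X$. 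For fibers inside $X$ this is exactly the statement that $m_X$ semiconjugates $f|_X$ to $g_0$, which we just established; for the singleton fibers outside $X$ there is nothing to check. Continuity of $g$ on $\C$ then follows from upper semi-continuity of $\widehat{\CC}_X$ together with the continuity of $f$, via the universal property of the quotient $M_X$.

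It then remains to verify that $g$ is a branched covering map. The key point is that $f$ is an open map and a local homeomorphism away from its finite critical set $C_f$, and both $M_X$ (a monotone quotient) and its analogue on the target side are cell-like-ish quotients that collapse non-separating continua; one shows $g$ is open (image of open under $M_X\circ f$ pushed through the quotient), that $g$ has finite fibers off the finite set $M_X(C_f)\cup M_X(\text{non-degenerate fibers hit by }f)$, and that near a regular point $g$ is a finite-to-one open map of the plane to itself, hence by the topological characterization of branched coverings (Stoïlow-type) $g$ is a branched cover of some finite degree, which must equal $\deg f$ since $M_X$ has degree one. I expect the main obstacle to be precisely this last verification — namely showing that collapsing the (possibly infinite, though countable) family of fibers does not destroy the branched-covering structure, in particular controlling what happens at points where a critical value of $f$ coincides with a non-degenerate fiber, and confirming the local degree remains finite and behaves consistently. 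The well-definedness of $g$ on fibers (that $f$ respects the decomposition) is conceptually the heart of the matter but follows cleanly from the finest property; the technical grind is in checking openness and local finiteness of $g$ to conclude it is a branched covering.
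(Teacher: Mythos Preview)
Your outline follows the paper's strategy (Moore's theorem for the planar extension, then Stoilow for the branched-covering conclusion), but there are three genuine gaps.

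First, the map $m_X\circ f|_X$ is \emph{not} monotone: the fiber over $y\in X_{FS}$ is $f^{-1}(m_X^{-1}(y))$, and a branched cover pulls a continuum back to a finite union of continua, not a single one. So you cannot invoke the finest-monotone-model property directly. The paper gets the ``into'' direction (well-definedness of $g$) differently: $m_X\circ f$ is \emph{finitely} monotone, so Lemma~\ref{finmocol} applies and shows it collapses every limit continuum; hence $f$ sends limit continua into $\fs$-classes and Lemma~\ref{lem:induced} produces the induced map.

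Second, fibers of $m_X$ need not be non-separating (your parenthetical argument does not work: a bounded complementary domain of a fiber can have boundary inside that same fiber without contradicting anything). The paper's extension $M_X$ collapses the \emph{topological hulls} of the $\fs$-classes, not just the classes, and is one-to-one elsewhere; that is what makes Moore's theorem applicable.

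Third, and most important: well-definedness of $g$ only gives that $\fs$-classes map \emph{into} $\fs$-classes. To show $g$ is open (and light) you need that they map \emph{onto} $\fs$-classes, so that $f$-images of $M_X$-saturated sets are again saturated. You correctly flag openness as the main obstacle but provide no mechanism. The paper's mechanism is nontrivial: it uses the transfinite description of $\fs$ from Remark~\ref{rem:transfinite} and proves by induction on the ordinal $\alpha$ that $\sim_\alpha$-classes map onto unions of $\sim_\alpha$-classes, lifting chains of limit continua via the openness of $f$. This is the substantive content of the proof and is not a routine ``technical grind''; without it, or some replacement, the argument does not close.
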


These results can be made stronger if $f$ is a polynomial.

\begin{thm}\label{thm:julifs}
  The finest \FS{} monotone model $m_{J_P}:J_P \to J_{P_{FS}}$ of the Julia
  set of a polynomial $P$ coincides on each component $X$ of $J_P$
  with the finest monotone map $m_X$ of $X$ to a \FS{} continuum.  In
  particular:
  \begin{enumerate}
  \item the finest \FS{} monotone model of $J_P$ is non-degenerate if and only
    if there exists a periodic component of $J_P$ whose finest \FS{} monotone
    model is non-degenerate;
  \item the set $J_P$ is \FS{} if and only if all periodic
    non-degenerate components of $J_P$ are locally connected.
  \end{enumerate}
\end{thm}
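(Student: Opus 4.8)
The plan is to prove the central claim — that $m_{J_P}$ restricted to each component $X$ of $J_P$ is, up to homeomorphism, the finest \FS{} monotone model $m_X$ of $X$ — and then to read off (1) and (2). First note that every component $X$ of $J_P$ is an unshielded continuum: since $X\subset J_P$ we have $U_\infty(J_P)\subseteq U_\infty(X)$, and since $J_P=\bd U_\infty(J_P)$ every point of $X$ is a limit of points of $U_\infty(J_P)\subseteq U_\infty(X)$; as always $\bd U_\infty(X)\subseteq X$, this gives $X=\bd U_\infty(X)$. Hence $m_X\colon X\to X_{FS}$ exists by Theorem~\ref{thm:finmod}, and so does $m_{J_P}$. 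Each $m_{J_P}$-fiber, being a continuum, lies in a single component of $J_P$; so every component $X$ is a union of $m_{J_P}$-fibers, $m_{J_P}|_X\colon X\to m_{J_P}(X)$ is monotone, and $m_{J_P}(X)$ is a continuum that is \FS{} (a subcompactum of the \FS{} compactum $J_{P_{FS}}$ is \FS{}). By the universal property of $m_X$ there is then a monotone surjection $g_X\colon X_{FS}\to m_{J_P}(X)$ with $m_{J_P}|_X=g_X\circ m_X$, so $m_X$ is at least as fine as $m_{J_P}|_X$; the claim is exactly that each $g_X$ is injective, i.e.\ that $m_{J_P}|_X$ is also at least as fine as $m_X$.

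For the reverse refinement I would exhibit a monotone \FS{} model of $J_P$ that is at least as fine as every $m_X$. Let $\mathcal{G}$ be the decomposition of $J_P$ whose members are the fibers $m_X^{-1}(y)$, over all components $X$ of $J_P$ and all $y\in X_{FS}$. If $\mathcal{G}$ is \usc{} with $J_P/\mathcal{G}$ finitely Suslinian, then the quotient map $q\colon J_P\to J_P/\mathcal{G}$ is a monotone \FS{} model of $J_P$, so by the universal property of $m_{J_P}$ we may write $q=s\circ m_{J_P}$; hence every $m_{J_P}$-fiber lies inside a single member of $\mathcal{G}$, i.e.\ $m_{J_P}|_X$ is at least as fine as $m_X$ for every component $X$. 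Together with the previous paragraph this gives $m_{J_P}|_X\cong m_X$.

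Thus everything reduces to showing $\mathcal{G}$ is \usc{} with \FS{} quotient, and both follow from the single structural fact that \emph{for every $\e>0$, $J_P$ has only finitely many components of diameter at least $\e$}. Granting it: given members $D_n\in\mathcal{G}$, pass to a subsequence so that either infinitely many lie in one component $X_0$ — in which case upper semicontinuity of the decomposition of $X_0$ into $m_{X_0}$-fibers shows the Hausdorff limit lies in a member of $\mathcal{G}$ — or all the components $X_n\supseteq D_n$ are distinct; in the latter case, if the Hausdorff limit $K$ had $\diam K=2\e>0$ then eventually $\diam D_n\ge\e$, forcing infinitely many distinct components of diameter $\ge\e$, which is impossible, so $K$ is a point. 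Hence $\mathcal{G}$ is \usc{} and $J_P/\mathcal{G}$ is a compactum. For the \FS{} property of $J_P/\mathcal{G}$: pairwise disjoint subcontinua $Z_n$ of $J_P/\mathcal{G}$ of diameter $\ge\e$ pull back, under the monotone and uniformly continuous $q$, to pairwise disjoint subcontinua $q^{-1}(Z_n)$ of $J_P$ of diameter bounded below by some $\e'=\e'(\e)>0$; each $q^{-1}(Z_n)$ is connected, hence lies in a single component of $J_P$, and since every $X_{FS}$ is \FS{} a single component can receive only finitely many $Z_n$ — so the structural fact bounds the number of components involved, and $\{Z_n\}$ is finite. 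The structural fact is where I expect the genuine difficulty: it rests on the classical facts that non-degenerate components of $J_P$ are eventually periodic, that there are only finitely many cycles of such components (each carries a polynomial-like return map of degree $\ge 2$, hence meets the forward orbit of one of the finitely many critical points of $P$), and that the components obtained as deep iterated preimages of these finitely many periodic components have diameters tending to $0$ — the standard shrinking estimate for polynomial dynamics, cf.\ \cite{kiw04}; together these bound the number of components of diameter $\ge\e$.

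Finally I would deduce (1) and (2). By the central claim $m_{J_P}(X)\cong X_{FS}$ for each component $X$, and since $m_{J_P}$ is monotone these images are exactly the components of $J_{P_{FS}}$; hence $J_{P_{FS}}$ is non-degenerate iff some (necessarily non-degenerate) component $X$ of $J_P$ has $X_{FS}$ non-degenerate. Such $X$ is eventually periodic, $P^a(X)=X^*$ with $X^*$ periodic; Theorem~\ref{thm:branchcover}, applied to the fully invariant compactum $J_P$, gives a branched covering $g$ with $g\circ m_{J_P}=m_{J_P}\circ P$, so $g^a(X_{FS})\cong m_{J_P}(P^a(X))\cong X^*_{FS}$, and since $g^a$ is finite-to-one it cannot collapse the non-degenerate continuum $X_{FS}$; thus $X^*_{FS}$ is non-degenerate with $X^*$ periodic, proving (1) (the converse being trivial). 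For (2): $J_P$ is \FS{} iff $\id_{J_P}$ is its finest \FS{} model iff, by uniqueness (Lemma~\ref{finunik}), $m_{J_P}$ is a homeomorphism; since $m_{J_P}$ cannot identify points of distinct components, this holds iff every $m_{J_P}|_X\cong m_X$ is injective, i.e.\ iff every component of $J_P$ is \FS{}, i.e.\ (being unshielded) \LC{}. Since $P$ maps each component of $J_P$ onto a component by an open finite-to-one map (complete invariance makes $P|_{J_P}$ open, and $P$ is $(\deg P)$-to-one), and open finite-to-one maps preserve local connectedness both ways, all components of $J_P$ are \LC{} iff all periodic non-degenerate ones are, which is (2).
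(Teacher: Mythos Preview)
Your reduction to the ``structural fact'' --- that for every $\e>0$ only finitely many components of $J_P$ have diameter $\ge\e$ --- is clean, and everything you do downstream of it (upper semicontinuity of $\mathcal G$, \FS{}-ness of the quotient, the deductions of (1) and (2)) is correct. But that fact is precisely where the whole difficulty of the theorem is hiding, and your justification of it is a gap. You invoke a ``standard shrinking estimate'' and cite \cite{kiw04}; Kiwi's paper does not contain such a statement, and the claim that iterated preimage components of a fixed periodic component have diameters tending to $0$ is not an immediate consequence of the Branner--Hubbard/Qiu--Yin theorem either. The latter shows that \emph{wandering} components are points; it says nothing about the sizes of the (countably many) strictly preperiodic components. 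A genuine proof would need a modulus/distortion argument for pullbacks of a collar around each periodic component, with control on how often critical orbits can enter that collar --- none of which you supply. As written, the proposal assumes the hardest step.

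The paper avoids this entirely by taking a different route. Rather than bounding the number of large components, it shows directly that the two decompositions agree: every limit continuum of $J_P$ is contained in a single $m_X$-fiber of the component $X$ in which it sits (Theorem~\ref{naifs}). The interesting case is a sequence of pairwise distinct components $T_i$ Hausdorff-converging to a nondegenerate continuum $C'\subset T$. Using the polynomial-like structure on the periodic component $T$ and the Levin--Przytycki accessibility theorem \cite{lp96}, one shows that the $\psi\!\circ\!\varphi$-images of the $T_i$ in $\C\setminus\overline\disk$ must converge to a single point $\alpha\in\ucirc$, so that $C'\subset\varphi^{-1}(\mathrm{Imp}(\alpha))$; then Lemma~16 of \cite{bco08} (impressions are collapsed by the finest \LC{} model) finishes the argument. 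Preperiodic components are handled by pullback. Parts (1) and (2) then follow exactly along the lines you indicate, but now resting on a proved statement rather than on the unproved shrinking claim.

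In short: your architecture is correct and pleasantly elementary \emph{if} the structural fact holds, but you have not proved it and the reference does not. The paper trades that fact for external-ray/impression technology (\cite{lp96} and \cite{bco08}), which does the job without ever needing to know that large components are finite in number.
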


By \cite{bco08}, one can specify exactly the situations in which a
non-degenerate finitely Suslinian model of a polynomial Julia set
exists. This is because any periodic component of $J_P$ is the Julia
set of a polynomial-like map, which is hybrid equivalent (in
particular, topologically conjugate) to a polynomial.  Hence,
summarizing the results of \cite{bco08}, we conclude that a periodic
component $Y$ of $J_P$ has a non-degenerate \FS{} model if and only if
one of the following is true:

\begin{enumerate}
\item $Y$ contains infinitely many periodic points, each of which separates $Y$,
\item the topological hull of $Y$ contains either a parabolic or
  attracting periodic point, or
\item $Y$ admits a \emph{Siegel configuration}, which roughly means
  that are subcontinua of the Julia set, comprised of finitely many
  impressions and disjoint from all other impressions, which in
  essence correspond to the critical points on the boundaries of
  Siegel disks in locally connected Julia sets.
\end{enumerate}

For all details, the reader is invited to read \cite{bco08},
especially Section 5 thereof.

\section{Topological Lemmas}
\label{sec:top-lemmas}

First we introduce several useful notions. When speaking of limits of
compacta, we always mean convergence in the Hausdorff sense.

\begin{defn}\label{usc}
  A partition of a compactum $X$ if said to be \textbf{\usc{}} if for
  every pair of convergent sequences $(x_i)_{i=1}^\infty$ and
  $(y_i)_{i=1}^\infty$ of points in $X$ such that $x_i, y_i$ belong to
  some element $D_i$ of the partition, we have that the points
  $\lim_{i \to \infty} x_i$ and $\lim_{i \to \infty} y_i$ belong to
  some element $D$ of the partition. In this case the equivalence
  relation $\sim$ induced by the partition is said to be
  \textbf{closed}.  Equivalently, $\sim$ is said to be \textbf{closed}
  if its graph is closed in $X\times X$.
\end{defn}

The following construction is less standard.

\begin{defn}\label{ra}
  Let $\A$ be a family of 
  subsets of a compactum $X$.  An equivalence relation $\sim$
  \textbf{respects $\A$} if $\sim$ is closed and
  every member of $\A$ is contained in a $\sim$-class.  If $\sim$
  and $\approx$ are equivalence relations on a set $X$, we say that
  $\sim$ is \textbf{finer} than $\approx$ if $\sim$-classes are
  contained in $\approx$-classes.  The \textbf{finest closed
  equivalence relation generated by $\A$} is the finest equivalence
  relation $\sim_\A$ respecting $\A$.

Equivalently, one can define \textbf{continuous maps respecting
$\A$} as maps which collapse all elements of $\A$ to points. Then we
can define the \textbf{finest continuous map respecting $\A$}, i.e.
a continuous map $\psi^\A:X\to Z$ respecting $\A$ and such that for
any map $f:X\to Q$ which respects $\A$ there exists a map $g:Z\to Q$
which can be composed with $\psi^\A$ to give $f=g\circ \psi^\A$.
\end{defn}

Lemma~\ref{lem:connected_classes-1} shows that the finest closed
equivalence relation generated by $\A$ (and hence, the finest map
respecting $\A$) exists and specifies its properties if elements of
$\A$ are connected.

\begin{lem}\label{lem:connected_classes-1}
  The finest closed equivalence relation generated by $\A$ exists
  and is therefore unique (thus, the finest map $\psi^\A$ respecting
  $\A$ exists and is well-defined). If $\A$ consists of connected
  subsets of a compactum $X$, then all $\sim_\A$-classes are
  continua and the finest continuous map respecting $\A$ is
  monotone.
\end{lem}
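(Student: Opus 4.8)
The plan is to proceed in three stages: construct $\sim_\A$ as an intersection of candidate relations; show that if $\A$ consists of continua this relation automatically has connected classes; and deduce monotonicity of the quotient from the fact that its point-preimages are then continua. For the first stage, note that the collection of closed equivalence relations on $X$ respecting $\A$ is nonempty, since the relation whose only class is $X$ has closed graph and trivially respects $\A$. Let $\sim_\A$ be the relation whose graph is the intersection of the graphs of all members of this collection: an intersection of closed sets is closed and an intersection of equivalence relations is an equivalence relation, so $\sim_\A$ is a closed equivalence relation, and it respects $\A$ because for $A\in\A$ and $a\in A$ every relation in the collection contains $A$ inside the class of $a$, so $A$ lies in the $\sim_\A$-class of $a$. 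Hence $\sim_\A$ is finer than every closed equivalence relation respecting $\A$, so it is the (unique) finest one. Setting $\psi^\A\colon X\to X/\sim_\A$ to be the quotient map onto the compact quotient, the defining property of $\sim_\A$ transfers directly: any $f\colon X\to Q$ respecting $\A$ induces the equivalence relation given by $f(x)=f(y)$, which is closed (its graph is the preimage of the diagonal of the Hausdorff space $Q$) and respects $\A$, hence is coarser than $\sim_\A$, so $f$ factors continuously through $\psi^\A$; this also pins $\psi^\A$ down up to homeomorphism.

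For the second stage, assume every member of $\A$ is connected and define $\sim$ by $x\sim y$ iff $x$ and $y$ lie in the same component of the same $\sim_\A$-class. This is an equivalence relation finer than $\sim_\A$, and it respects $\A$ set-theoretically because a connected $A\in\A$ lies inside a $\sim_\A$-class and hence inside a single component of it. The key point is that $\sim$ is closed. Suppose $x_i\to x$ and $y_i\to y$ with $x_i,y_i$ contained in a subcontinuum $K_i$ of their common $\sim_\A$-class. Since the hyperspace of subcontinua of $X$ is compact, after passing to a subsequence $K_i\to L$ in the Hausdorff metric for some subcontinuum $L$ containing $x$ and $y$. Any two points of $L$ are limits of $\sim_\A$-equivalent points of the $K_i$, so by closedness of $\sim_\A$ they are $\sim_\A$-equivalent; thus $L$ lies inside one $\sim_\A$-class and, being connected and containing $x$ and $y$, witnesses $x\sim y$. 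So $\sim$ is a closed equivalence relation respecting $\A$, and minimality of $\sim_\A$ forces $\sim=\sim_\A$. Therefore each $\sim_\A$-class equals a component of itself, hence is connected, and being closed in $X$ it is compact, so it is a continuum.

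For the third stage, $\psi^\A$ is a closed continuous surjection of compacta with continuum point-preimages. If $Q\subseteq X/\sim_\A$ is a subcontinuum and $(\psi^\A)^{-1}(Q)=E\sqcup F$ with $E,F$ disjoint, nonempty, and closed in the preimage, then each fiber over a point of $Q$ is connected and hence lies wholly in $E$ or wholly in $F$, so $Q=\psi^\A(E)\sqcup\psi^\A(F)$ is a separation into nonempty sets that are closed because $\psi^\A$ is a closed map---contradicting connectedness of $Q$. Hence $(\psi^\A)^{-1}(Q)$ is connected, i.e. $\psi^\A$ is monotone.

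The delicate point is the closedness of $\sim$. Refining $\sim_\A$ by splitting just one disconnected class into two pieces and leaving the rest alone does not work, because another $\sim_\A$-class can straddle the same pair of separating open sets and destroy closedness; splitting every class into its components simultaneously is what makes things work, and the verification genuinely uses both compactness of the hyperspace of subcontinua and closedness of $\sim_\A$.
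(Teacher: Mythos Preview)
Your proof is correct and follows essentially the same strategy as the paper: intersect all closed equivalence relations respecting $\A$ to obtain $\sim_\A$, then show that refining $\sim_\A$ into components of its classes yields a closed relation still respecting $\A$, whence minimality forces equality. The only differences are that where the paper invokes Lemma~13.2 of Nadler's \emph{Continuum Theory} for the closedness of this component refinement you supply a direct hyperspace-compactness argument, and you also spell out the passage from connected point-fibers to monotonicity that the paper leaves implicit.
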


\begin{proof}
To see that $\sim_\A$ is well-defined, let $\Xi_\A$ be the set of
all upper semi-continuous equivalence relations which respect $\A$
($\Xi_\A$ is non-empty as it includes the trivial equivalence
relation under which all points are equivalent). Then it is easy to
see that the relation $\sim_\A$ defined by ``$x \sim_\A y$ if and
only if $x \sim y$ for all $\sim \in \Xi_\A$'' is again a closed
equivalence relation respecting $\A$, and that $\sim_\A$ is finer
than all closed equivalence relations from $\Xi_\A$. It follows that
the quotient map $X\to X/\sim_\A$ is in fact the finest continuous
map which respects $\A$.

  It suffices to show that all $\sim_\A$ classes are connected.
  According to \cite[Lemma 13.2]{Nadler:ContTh}, the equivalence
  relation $\sim$ whose classes are the components of
  $\sim_\A$-classes is also an \usc{} equivalence relation, and
  $\sim$-classes are contained in $\sim_\A$-classes. Since elements of
  $\A$ are connected, it follows that $\sim$ still respects $\A$, so
  $\sim_\A$ classes are contained in $\sim$-classes.  Therefore $\sim
  = \sim_\A$, and $\sim_\A$-classes are connected.
\end{proof}

It is quite easy to determine when a continuous function on $X$
induces a continuous function on $X / \sim_\A$, as the following lemma
shows.

\begin{lem}\label{lem:induced}
  If $f:X \to X$ is a continuous function which sends elements of $\A$
  into $\sim_\A$-classes, then $f$ induces a function $g:X / \sim_\A
  \to X / \sim_\A$ with $\psi^\A \circ f=g\circ \psi^\A$ ($g$ maps the
  $\sim_\A$-class of $x$ to the $\sim_\A$-class of $f(x)$).
\end{lem}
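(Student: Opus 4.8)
The plan is to read this off from the universal property of $\psi^\A$ established in Lemma~\ref{lem:connected_classes-1} (equivalently, from Definition~\ref{ra}). First I would note that the composition $h:=\psi^\A\circ f:X\to X/\sim_\A$ is a continuous map which respects $\A$. Indeed, for any $A\in\A$ the hypothesis says that $f(A)$ is contained in a single $\sim_\A$-class, and $\psi^\A$ collapses that class to a point, so $h(A)$ is a single point; thus $h$ collapses every element of $\A$ to a point, which is exactly the meaning of ``$h$ respects $\A$''.

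Since $\psi^\A$ is the finest continuous map respecting $\A$, the universal property provides a continuous map $g:X/\sim_\A\to X/\sim_\A$ with $h=g\circ\psi^\A$, i.e. $\psi^\A\circ f=g\circ\psi^\A$. Evaluating at a point $x$ shows that $g$ sends $\psi^\A(x)$, the $\sim_\A$-class of $x$, to $\psi^\A(f(x))$, the $\sim_\A$-class of $f(x)$; this is precisely the description of $g$ in the statement, and in particular $g$ is well-defined and single-valued.

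If one prefers to argue directly rather than quote the universal property: let $\approx$ be the equivalence relation on $X$ induced by the continuous map $\psi^\A\circ f$. Its graph is closed because $X/\sim_\A$ is Hausdorff, being the quotient of a compactum by a closed equivalence relation; and by the computation above $\approx$ respects $\A$, so $\sim_\A$ is finer than $\approx$. Hence $x\sim_\A y$ implies $\psi^\A(f(x))=\psi^\A(f(y))$, giving well-definedness of $g$, and continuity of $g$ follows since $\psi^\A$ is a quotient map (a continuous surjection between compacta is closed) and $g\circ\psi^\A=\psi^\A\circ f$ is continuous. There is essentially no obstacle here; the only point needing a moment's care is the passage from ``$f$ carries members of $\A$ into $\sim_\A$-classes'' to ``$\psi^\A\circ f$ respects $\A$'', which is immediate, together with the routine identification of the resulting $g$ with the map named in the statement.
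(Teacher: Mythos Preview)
Your proof is correct and is essentially the same as the paper's: both observe that $\psi^\A\circ f$ is constant on each element of $\A$ (equivalently, its fibers form an \usc{} partition respecting $\A$), and then invoke the universal/finest property of $\psi^\A$ from Definition~\ref{ra} and Lemma~\ref{lem:connected_classes-1} to obtain $g$. Your optional direct argument is a harmless elaboration of the same idea.
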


\begin{proof}
  It is sufficient to show that the $f$-image of a $\sim_\A$-class
  is contained in a $\sim_\A$-class.  Consider the fibers of $\psi^\A
  \circ f$. By assumption, $f$ sends elements of $\A$ into
  $\sim_\A$-classes, so $\psi^\A \circ f$ is constant on the elements
  of $\A$.  Therefore, the fibers of $\psi^\A \circ f$ form an
  \usc{} partition of $X$ which respects $\A$.  Since $\psi^\A$ is the
  finest such map in the sense of Definition~\ref{ra},
  there exists a map $g: X / \sim_\A \to X / \sim_\A$ with
  $\psi^\A \circ f=g\circ \psi^\A$ as desired.
\end{proof}

\begin{remark}\label{rem:transfinite}
  For later reference, we note that there is also a transfinite
  construction of the equivalence relation $\sim_{\mathcal A}$.  To
  begin, let $\sim_0$ denote the equivalence relation such that $x
  \sim_0 y$ if and only if $x$ and $y$ are contained in a connected
  finite union of elements of $\A$. If an ordinal $\alpha$ has an
  immediate predecessor $\beta$ for which $\sim_{\beta}$ is defined,
  we define $x \sim_\alpha y$ if there exist finitely many sequences
  of $\sim_{\beta}$ classes whose limits comprise a continuum
  containing $x$ and $y$ (here, the limit of non-closed sets is
  considered to be the same as the limit of their closures).  In the
  case that $\alpha$ is a limit ordinal, we say $x\sim_\alpha y$
  whenever there exists $\beta < \alpha$ such that $x \sim_\beta y$.
  Notice that the sequence of $\sim_\alpha$-classes of a point $x$ (as
  $\alpha$ increases) is an increasing nest of connected sets, with
  the closure of each being a subcontinuum of its successor.  It is
  also apparent that $\sim_\alpha$-classes are contained in
  $\sim_\A$-classes for all ordinals $\alpha$.

  Let us now show that $\sim_{\mathcal A} = \sim_\Omega$ where $\Omega$
  is the smallest uncountable ordinal.  To see this, we first note that
  $\sim_\Omega = \sim_{(\Omega+1)}$.  This is because the sequence of
  closures of $\sim_\alpha$-classes containing a point $x$ forms an
  increasing nest of subsets, no uncountable subchain of which can be
  strictly increasing in the plane \cite[Theorem 3, p.~258]{kur66}.
    Therefore, all
  $\sim_\alpha$-classes have stabilized when $\alpha = \Omega$.  This
  implies that $\sim_\Omega$ is a closed equivalence relation, since
  the limit of $\sim_\Omega$-classes is a $\sim_{(\Omega+1)}$-class,
  which we have shown is a $\sim_\Omega$-class again.  Finally,
  $\sim_\Omega$ respects $\A$ and $\sim_\Omega$-classes are contained
  in $\sim_{\mathcal A}$-classes, so $\sim_{\mathcal A}$ and
  $\sim_\Omega$ coincide.
\end{remark}

Let us become more specific and study \FS{} compacta.

\begin{defn}[Limit continuum and $\fs$]\label{rx}
  A subcontinuum $C$ of $X$ is said to be a \textbf{limit continuum} if
  there exists a sequence $(C_n)_{n=1}^\infty$ of pairwise disjoint
  subcontinua of $X$ converging to $C$.  We define $\fs_X$ as the
  \textbf{finest equivalence relation respecting the family of limit
    continua} (if the context is clear, we may omit the subscript
  and refer simply to $\fs$).
\end{defn}

Note that this notion is slightly more general than the classical
notion of \emph{continuum of convergence} in continuum theory.  Also,
it is easy to see that a continuum is \FS{} if and only if it contains
no non-degenerate limit continua.

\begin{lem}\label{lem:ffs}
  For any compactum $X$, the quotient $X / \fs$ is \FS.
\end{lem}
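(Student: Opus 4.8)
The plan is to show that any collection of pairwise disjoint nondegenerate subcontinua of $X/\fs$ with diameters bounded away from zero must be finite; equivalently, that $X/\fs$ contains no nondegenerate limit continuum (using the remark after Definition~\ref{rx} that a continuum is \FS{} if and only if it contains no nondegenerate limit continua — though one should be slightly careful, since this characterization was stated for continua and $X/\fs$ need not be connected; the argument below works directly with collections of disjoint subcontinua in either case). So suppose, toward a contradiction, that $(D_n)$ is a sequence of pairwise disjoint subcontinua of $X/\fs$ converging in the Hausdorff metric to a nondegenerate continuum $D \subset X/\fs$. The strategy is to pull these back under $\psi := \psi^{\A}$ with $\A$ the family of limit continua, obtain a ``limit continuum upstairs'' whose $\psi$-image is nondegenerate, and contradict the fact that $\psi$ collapses limit continua to points.

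First I would set $C_n = \psi^{-1}(D_n)$ and $C = \psi^{-1}(D)$. By Lemma~\ref{lem:connected_classes-1}, $\psi$ is monotone, so each $C_n$ is a subcontinuum of $X$ and $C$ is a subcontinuum of $X$; since the $D_n$ are pairwise disjoint, so are the $C_n$. Passing to a subsequence, I may assume $C_n \to C'$ in the Hausdorff metric for some subcontinuum $C'$ of $X$; a standard semicontinuity argument (the Hausdorff limit of preimages under a closed map is contained in the preimage of the Hausdorff limit of the images, and conversely $\psi(C') \supseteq$ the limit of $\psi(C_n) = D_n$) gives $\psi(C') = D$. In particular $C'$ is a nondegenerate limit continuum of $X$: it is the Hausdorff limit of the pairwise disjoint subcontinua $C_n$, and it is nondegenerate because its image $D$ is nondegenerate.

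But $C'$ being a limit continuum means $C' \in \A$, so by definition $\psi$ collapses $C'$ to a single point, i.e.\ $\psi(C')$ is degenerate, contradicting $\psi(C') = D$ and $D$ nondegenerate. Hence no such sequence $(D_n)$ exists, and every collection of pairwise disjoint subcontinua of $X/\fs$ with diameters bounded away from $\varepsilon>0$ is finite (if it were infinite, one could extract a convergent subsequence of such continua whose limit is nondegenerate, since diameters stay $\ge \varepsilon$, producing exactly the forbidden configuration). Therefore $X/\fs$ is \FS.

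**The main obstacle** I anticipate is the compatibility between Hausdorff limits and the monotone quotient map $\psi$ — specifically verifying that after passing to a subsequence the pulled-back continua $C_n$ converge to a continuum $C'$ with $\psi(C') = D$ and that $C'$ is genuinely nondegenerate. The convergence of a subsequence of $(C_n)$ is immediate from compactness of the hyperspace $2^X$; that the limit is connected uses that a Hausdorff limit of continua is a continuum. The equality $\psi(C') = D$ requires continuity of $\psi$ in one direction and, in the other, the observation that $C'$ must meet $\psi^{-1}(d)$ for each $d \in D$ (pick points $x_n \in C_n$ with $\psi(x_n) \to d$, pass to a convergent subsequence of the $x_n$, whose limit lies in $C'$ and maps to $d$). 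Nondegeneracy of $C'$ then follows since $\psi(C') = D$ is nondegenerate. None of these steps is deep, but stating them cleanly is where the care is needed; everything else reduces to unwinding Definition~\ref{ra} and Definition~\ref{rx}.
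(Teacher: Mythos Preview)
Your proof is correct and follows essentially the same approach as the paper: pull back a sequence of pairwise disjoint subcontinua via the monotone quotient map, pass to a convergent subsequence upstairs, observe that the limit is a limit continuum and hence collapses to a point, and conclude that the downstairs limit is degenerate. The paper phrases this directly rather than by contradiction (showing any convergent sequence of disjoint subcontinua in $X/\fs$ has a singleton limit), but the argument is the same, and your extra care about verifying $\psi(C')=D$ via pointwise pullback is a correct elaboration of what the paper compresses into the phrase ``continuity of $m$.''
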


\begin{proof}
  Let $(C_n)_{n=1}^\infty$ be a (without loss of generality
  convergent) sequence of pairwise disjoint subcontinua of $X /
  \fs$. Let $m:X \to X / \fs$ denote the quotient map.  A
  subsequence of the preimages $(m^{-1}(C_n))_{n=1}^\infty$
  converges to a continuum $K$. By definition of $\fs$ we have that
  $m(K)$ is a singleton, say, $\{a\}$, and continuity of $m$ implies
  that $(C_n)_{n=1}^\infty$ converges to $\{a\}$. Since
  $(C_n)_{n=1}^\infty$ was arbitrary, we have that $X/ \fs$ contains
  no non-degenerate limit continua and is therefore \FS{}.
\end{proof}

\prettyref{lem:ffs}, together with the characterization of \FS{} compacta as
those with no limit continua, suggests that $X / \fs$ could be the finest model
of $X$. Such a fact would mean that any monotone map of $X$ onto a \FS{}
compactum must collapse limit continua. However, in general this is not true.

\begin{exm}[A continuum with no finest \FS{} model]\label{lcnotfs}
  De\-fine a continuum $X$ as follows, and as depicted in
  \prettyref{fig:nofs}:
  \begin{align*}
    H_n &= [0,1] \times \{1/2^n\},\,n\in\nats,\\
    H & = [0, 1]\times \{0\},\\
    V_{p/q} &= \{p/q\} \times [0,1/q] \text{, $p/q$ a dyadic rational}
  \end{align*}
  \[ X_1 = \bigcup\{H_n\,\mid\, n \in \nats\} \cup H \cup
  \bigcup\{V_{p/q}\,\mid\,0<p/q<1\text{ dyadic}\}
  \]
  \begin{figure}
    \includegraphics[width=.4\textwidth]{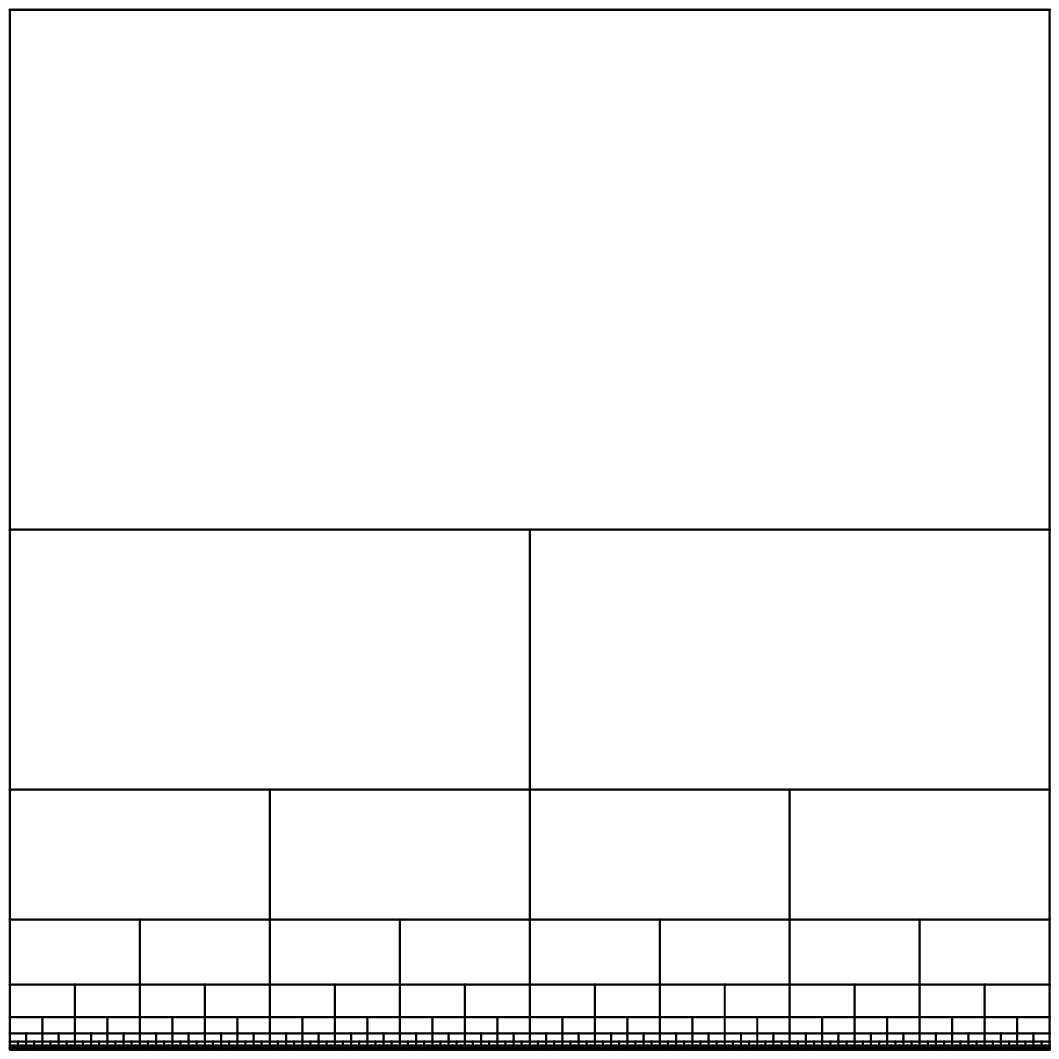}
    \caption{A continuum with no finest \FS{} model.}
    \label{fig:nofs}
  \end{figure}
  Observe that $X_1$ is a locally connected, not \FS{}, nowhere dense
  and \emph{not} unshielded in $\C$ continuum. There are two
  essentially different kinds of \FS{} monotone quotients of $X_1$,
  depicted in Figure~\ref{fig:fsquotients}. One map, $h$, corresponds
  to identifying the unique maximal limit continuum $H = \lim_{n \to
    \infty} H_n$ to a point. Any finer (and not even necessarily
  monotone) map $h'$ to a \FS{} compactum would still keep images of
  $H_n$ disjoint, implying that images of $H_n$ must converge to a
  point which has to be the image of $H$. Thus, $h'=h$ and $(h(H), h)$
  is a top \FS{} model of $X_1$ which happens to be monotone.
  \begin{figure}
    \includegraphics[width=.4\textwidth]{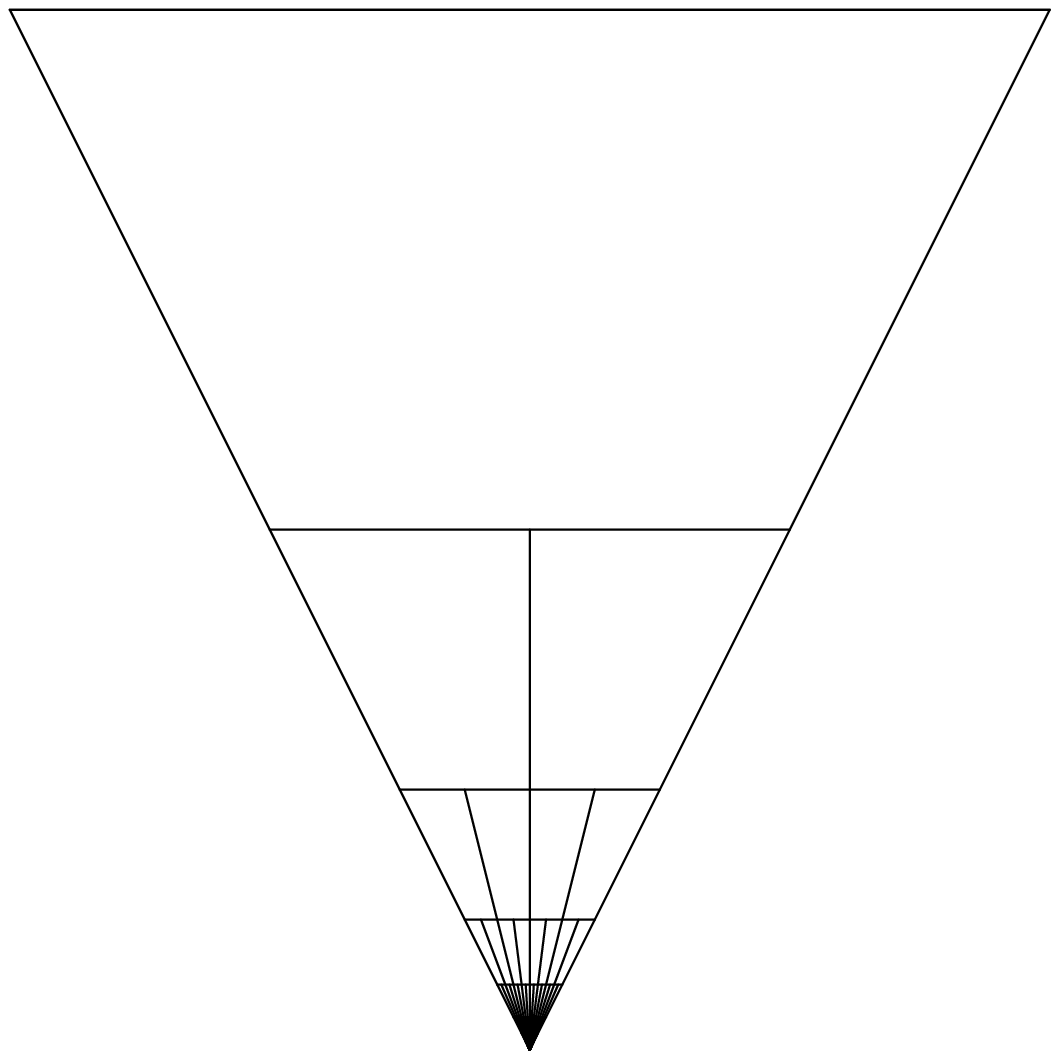}
    \includegraphics[width=.4\textwidth]{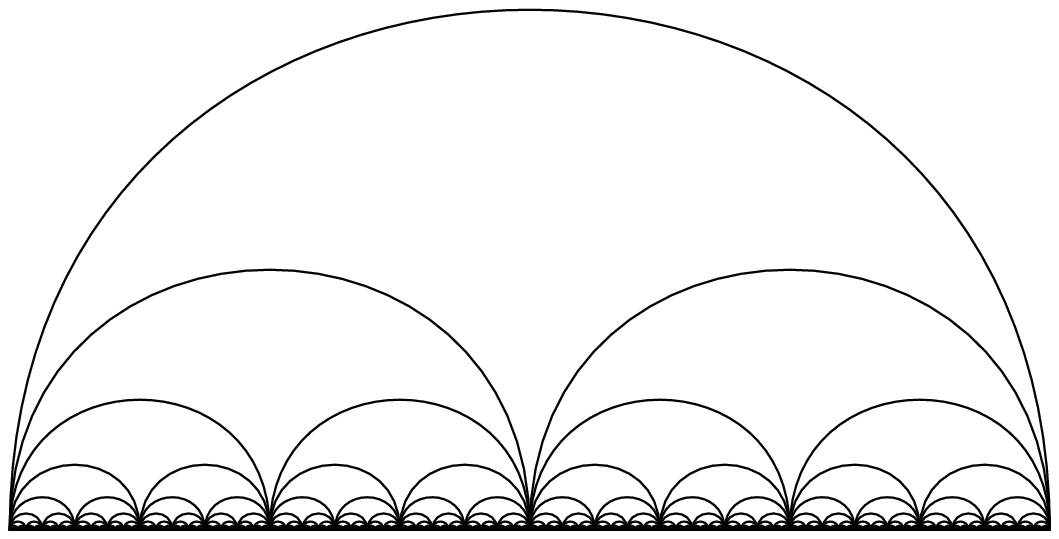}
    \caption{Essentially different \FS{} quotients of the continuum
      depicted in Figure~\ref{fig:nofs}.}
    \label{fig:fsquotients}
  \end{figure}

  Other quotients of $X_1$ with \FS{} images are maps $\ph_N$ which
  identify to points members of the collection
  $\{V_{p/q}\,\mid\,q>N\}$.  This yields a sequence of maps
  $(\ph_N)_{N=0}^\infty$, with $\ph_{N+1}$ finer than $\ph_N$ for all
  $N$. On the other hand, none of these maps can be compared with
  $\psi$ in the sense that neither $h$ is finer than $\ph_N$ nor
  $\ph_N$ is finer than $h$. As explained above, it follows from the
  definitions now that $h$ is \textbf{not} the finest \FS{} model of
  $X_1$ (neither is it the finest \FS{} monotone model of $X_1$). It is
  worth noticing also that for any $N$ the only maps finer than both
  $\ph_N$ and $h$ are homeomorphisms (since the intersection of any
  fibers of $h$ and $\ph_N$ is at most a point) and that the only
  maps finer than every map in $\{\ph_N\,\mid\,N \in \nats\}$ are
  homeomorphisms.
\end{exm}

In the unshielded case the situation is better. First we need
\prettyref{defn:irco}.

\begin{defn}[Irreducible continua]\label{defn:irco}
Given two disjoint closed sets $A, B$, a continuum $C$ is said to be
\textbf{irreducible between $A$ and $B$} if $C$ intersects both $A$ and $B$ and
does not contain a subcontinuum with the same property.  Given a continuum $D$
intersecting $A$ and $B$, one can use Zorn's Lemma to find a subcontinuum
$C\subset D$ irreducible between $A$ and $B$.
\end{defn}

We also need \prettyref{lem:irrcir}.

\begin{lem}\label{lem:irrcir} Let $K$ be an irreducible
continuum between $\partial U$ and $\partial V$ where $U, V$ are
open sets with disjoint closures. Then $K$ is disjoint from both $U$
and $V$.
\end{lem}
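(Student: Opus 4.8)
The plan is to argue by contradiction: suppose $K\cap U\neq\emptyset$ (the case $K\cap V\neq\emptyset$ being symmetric), and produce a proper subcontinuum of $K$ that still meets both $\partial U$ and $\partial V$, contradicting irreducibility. The key observation is that since $K$ is irreducible between $\partial U$ and $\partial V$, it meets $\partial U$ in some point $p$ and $\partial V$ in some point $q$, and (because $\ol U\cap\ol V=\emptyset$) we have $p\neq q$ and in fact $K$ is not contained in $\ol U$; otherwise $K\subset\ol U$ would force $q\in\ol U\cap\partial V\subset\ol U\cap\ol V=\emptyset$. So $K$ contains points outside $\ol U$, contains points of $U$ (by our assumption), and contains the point $p\in\partial U$ separating these.

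First I would pass to the subcontinuum structure near $U$. Consider the open set $K\cap U$ inside $K$; let $W$ be the component of $K\setminus \partial U$ that contains a chosen point of $K\cap U$. Actually, the cleaner route is to use a standard boundary-bumping argument: let $Q$ be a component of $\ol{K\cap U}$ — equivalently, of $K\cap\ol U$. By the boundary bumping lemma (Nadler, \cite{Nadler:ContTh}) applied in the continuum $K$ to the open subset $K\cap U$, the component $Q$ of $K\cap\ol U$ meets $\partial_K(K\cap U)\subset K\cap\partial U$. Now form $K' = \ol{K\setminus U}$. Since $K$ is a continuum and $K\setminus U$ is nonempty (as $K\not\subset\ol U$) and $U$ is open, one checks that $K'=\ol{K\setminus U}$ is a nonempty closed subset of $K$ which meets $\partial U$ (again by boundary bumping: every component of $\ol{K\setminus U}$ whose removal would disconnect things must touch $\partial U$, and more simply $\ol{K\setminus U}\setminus(K\setminus U)\subset \partial U$ together with connectivity of $K$ forces $K'\cap\partial U\neq\emptyset$). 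Moreover $K'$ contains $q\in\partial V$ because $q\notin U$ (as $q\in\ol V$ and $U\cap\ol V=\emptyset$), so $q\in K\setminus U\subset K'$.

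The point now is that $K'$ is a proper subset of $K$: it omits every point of $K\cap U$, and that set is nonempty by assumption. But $K'$ need not be connected, so the final step is to extract from $K'$ a subcontinuum meeting both $\partial U$ and $\partial V$. Here I take a component $L$ of $K'$ containing $q$; by boundary bumping in $K$, $L$ must meet $\partial_K(K\setminus U)\subset\partial U$ (a component of the complement of an open set in a continuum bumps the boundary of that open set), so $L\cap\partial U\neq\emptyset$ and $q\in L\cap\partial V$. Then $L$ is a subcontinuum of $K$, contained in $K'\subsetneq K$, meeting both $\partial U$ and $\partial V$ — contradicting that $K$ is irreducible between $\partial U$ and $\partial V$. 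Hence $K\cap U=\emptyset$, and symmetrically $K\cap V=\emptyset$.

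I expect the main obstacle to be the bookkeeping of the two boundary-bumping applications: one needs $K\setminus U\neq\emptyset$ (which comes from $\ol U\cap\ol V=\emptyset$ plus $K\cap\partial V\neq\emptyset$), and one needs that a component of $K\setminus U$ in the continuum $K$ genuinely limits onto $\partial U$ rather than being a clopen piece — this is exactly the content of the boundary bumping lemma, but care is required because $K\setminus U$ is only closed in $K$ after taking closure, and the containment $\ol{K\setminus U}\setminus(K\setminus U)\subset\partial U$ must be invoked explicitly. Everything else is routine point-set topology using $\ol U\cap\ol V=\emptyset$.
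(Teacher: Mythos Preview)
Your argument is correct and uses the same core idea as the paper: apply the Boundary Bumping Theorem to a component of $K$ minus one of the sets, obtain a subcontinuum meeting both $\partial U$ and $\partial V$, and invoke irreducibility. The paper's version is slightly more streamlined---it works directly (not by contradiction), removing $\ol V$, taking the component $Y$ of $K\setminus\ol V$ through a point of $\partial U$, and concluding from $\ol Y=K$ that $K\cap V=\emptyset$---whereas you assume $K\cap U\neq\emptyset$, remove $U$, and derive a proper subcontinuum; but the substance is the same (your initial detour through $Q$ and the claim $\ol{K\cap U}=K\cap\ol U$ is unnecessary and not quite right, though you abandon it anyway).
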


\begin{proof}
Set $K'=K\setminus \ol{V}$. Take a component $Y$ of $K'$ containing a point
from $\partial U$. By the Boundary Bumping Theorem (Theorem 5.6 from
\cite[Chapter V, p. 74]{Nadler:ContTh}) $\ol{Y}$ intersects $\partial V$. Since
$K$ is irreducible,  $\ol{Y}=K$ and hence $K$ is disjoint from $V$. Similarly,
$K$ is disjoint from $U$.
\end{proof}

To prove our first theorem we need the following geometric lemma.  It is a
generalization of the fact that any homeomorphic copy of the letter $\theta$
embedded in the plane is not unshielded. For a planar continuum $Y$ the set
$\C\sm U_\iy(Y)$ is called the \textbf{topological hull of $Y$} and is denoted
by $T(Y)$.

\begin{lem}\label{lem:notheta}
  Suppose a planar compactum $X$ contains two disjoint continua $X_1,$
  $X_2 \subset X$ and three pairwise disjoint continua $C^1, C^2, C^3$
  such that $X_i \cap C^j \neq \emptyset$ for all $i \in \{1,2\}$ and
  $j \in \{1,2,3\}$.  Then $X$ is not unshielded.
\end{lem}

\begin{proof}
By way of contradiction we assume that $X$ is unshielded. Let us
collapse the topological hulls $T(X_1)$ and $T(X_2)$ to points $x_1$
and $x_2$ and let $m:\C\to\C$ denote this monotone map (by Moore's
Theorem \cite{moo62}, the image is homeomorphic to the plane). Then
$m(C^i)\cap m(C^j)=\{x_1,x_2\}$ for all $i\ne j$ and $m(X)$ is also
unshielded. Put $Z^j=T(m(C^j))$. Then $Z^i\cap Z^j=\{x_1,x_2\}$ for all
$i\ne j$.  By Theorem~63.5 of \cite{mun00} for each $i\ne j$, $Z^i\cup
Z^j$ separates $\C$ into precisely two components, one of which is
bounded and denoted by $B_{i,j}$. It follows that for some choice of
$i,j,k$ the set $Z_i$ intersects $B_{j,k}$, contradicting that $m(X)$ is
unshielded.
\end{proof}




We use Lemma~\ref{lem:notheta} 
to show that certain maps of unshielded compact sets collapse limit
continua. Let $\fm$ be the class of all \textbf{(continuumwise)
finitely monotone} maps, i.e. such maps $h:X\to Y$ that for any
\emph{continuum} $Z\subset Y$ the set $h^{-1}(Z)$ consists of
finitely many components.


\begin{lem}\label{finmocol}
  Suppose that $\ph:X\to Y$ is a finitely monotone map of an
  unshielded compact set $X$ onto a \FS{} compact set $Y$.  If\,
  $C\subset X$ is a limit continuum, then $\ph(C)$ is a point.
\end{lem}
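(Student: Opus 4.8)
The plan is to argue by contradiction: suppose $C$ is a limit continuum with $\ph(C)$ non-degenerate. Since $C$ is a limit continuum, there is a sequence of pairwise disjoint subcontinua $(C_n)$ of $X$ converging to $C$. Because $\ph(C)$ is a non-degenerate continuum and $Y$ is metric, I would first pick two points $a \ne b$ in $\ph(C)$ and small disjoint open sets $U \ni a$, $V \ni b$ in $Y$ with $\overline U \cap \overline V = \emptyset$; inside $\ph(C)$ one can find a subcontinuum irreducible between $\overline U$ and $\overline V$ (Definition~\ref{defn:irco}), so after replacing $\ph(C)$ by this subcontinuum I may assume $\ph(C)$ meets both $\partial U$ and $\partial V$. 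Now lift: for each large $n$, since $C_n$ converges to $C$, the set $C_n$ comes close to $C$, so $\ph(C_n)$ comes close to $\ph(C)$ and hence $\ph(C_n)$ meets both $\overline U$ and $\overline V$ for all large $n$. Fix three such indices, giving three pairwise disjoint continua $C^1, C^2, C^3$ among the $C_n$, each of whose $\ph$-images meets $\overline U$ and $\overline V$.

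Next I want to produce, inside $X$, two disjoint continua $X_1, X_2$ each meeting all three $C^j$, which will contradict Lemma~\ref{lem:notheta} (as $X$ is unshielded). The natural candidates are components of $\ph^{-1}(\overline U)$ and $\ph^{-1}(\overline V)$. Here is where finite monotonicity enters: because $\ph \in \fm$, the preimage $\ph^{-1}(\overline U \cap \ph(C))$ — or more carefully, the preimage of an appropriate subcontinuum of $Y$ — has only finitely many components. Concretely, I would take a subcontinuum $L \subset Y$ irreducible between $\overline U$ and $\overline V$ that is contained in $\ph(C)$; then $\ph^{-1}(L)$ has finitely many components, and since each of the infinitely many $C^j$ meets $\ph^{-1}(L)$ (in fact $C^j \subset \ph^{-1}(\text{something near } L)$), some single component $K$ of (a slight enlargement of) $\ph^{-1}(L)$ must meet infinitely many of the $C_n$ — in particular at least three of them, say $C^1, C^2, C^3$. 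But one connected $K$ cannot play the role of both $X_1$ and $X_2$. To get two disjoint such continua, I would instead look separately at $\ph^{-1}(\overline U)$ and $\ph^{-1}(\overline V)$: by finite monotonicity each has finitely many components, so after passing to a subsequence of the $C_n$ I may assume there is one component $A$ of $\ph^{-1}(\overline U)$ and one component $B$ of $\ph^{-1}(\overline V)$ such that, for infinitely many $n$, $C_n$ meets both $A$ and $B$. Since $\overline U \cap \overline V = \emptyset$, the continua $A$ and $B$ are disjoint; picking three indices $n$ with $C^j$ meeting both $A$ and $B$ gives exactly the $\theta$-configuration forbidden by Lemma~\ref{lem:notheta}, a contradiction.

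The main obstacle I anticipate is the bookkeeping around "$C_n$ meets $A$ and $B$": a priori $C_n$ meets $\ph^{-1}(\overline U)$, but I need it to meet the \emph{same} component $A$ for infinitely many $n$ — this is exactly what the pigeonhole on the finitely many components gives, but one has to be careful that the relevant component count is genuinely finite, which requires applying the $\fm$ hypothesis to a \emph{continuum} in $Y$ (not to $\overline U$ itself, which need not be connected). So the clean route is to choose $L \subset \ph(C)$ a subcontinuum irreducible between $\overline U$ and $\overline V$, note $L$ is disjoint from $U$ and $V$ by Lemma~\ref{lem:irrcir} applied in $Y$, and work with the finitely many components of $\ph^{-1}(L)$ together with $\ph^{-1}(\overline U)$, $\ph^{-1}(\overline V)$; then a double pigeonhole over finitely many components, applied to the infinitely many $C_n$, yields disjoint $X_1 \subset \ph^{-1}(\overline U)$ and $X_2 \subset \ph^{-1}(\overline V)$ and three of the $C_n$ meeting both. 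A secondary point to check is the elementary fact that convergence $C_n \to C$ forces $\ph(C_n)$ to eventually meet any neighborhood of any point of $\ph(C)$, which is immediate from continuity and Hausdorff convergence; I would state it as a one-line remark rather than belabor it.
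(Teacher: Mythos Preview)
Your overall architecture matches the paper's: assume $\ph(C)$ is non-degenerate, use Hausdorff convergence so that infinitely many of the disjoint $C_n$ have $\ph(C_n)$ meeting two fixed well-separated sets, pigeonhole via finite monotonicity to land infinitely many $C_n$ on a single preimage component on each side, and then invoke Lemma~\ref{lem:notheta} for the $\theta$-configuration. That is exactly the paper's Case~2 argument.

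The genuine gap is the step you yourself flag and do not actually close: you need $\ph^{-1}(\overline U)$ and $\ph^{-1}(\overline V)$ to have only finitely many components, but $\fm$ guarantees this only for preimages of \emph{continua}, and your $\overline U,\overline V$ are closures of open sets in the possibly disconnected compactum $Y$. Your attempted repair via an irreducible subcontinuum $L\subset\ph(C)$ does not help, because you never show that the $C_n$ meet $\ph^{-1}(L)$ (you only know $\ph(C_n)$ meets $\overline U$ and $\overline V$, not that it meets the particular continuum $L$), and in the end you still appeal to ``finitely many components'' of $\ph^{-1}(\overline U)$ and $\ph^{-1}(\overline V)$. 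The paper's fix is clean and worth internalizing: first dispose of the case where the $\ph(C_i)$ lie in infinitely many distinct components of $Y$ (these are pairwise disjoint continua; the \FS{} property forces a subsequence to shrink to a point, so $\ph(C)$ is a point). In the remaining case the $\ph(C_i)$ lie in a single component $Z$ of $Y$; a \FS{} continuum is locally connected, so one can choose \emph{connected} open sets $Z_1,Z_2\subset Z$ with disjoint closures around the two target points. Now $\overline{Z_1},\overline{Z_2}$ are honest continua, $\fm$ gives finitely many components of $\ph^{-1}(\overline{Z_1})$ and $\ph^{-1}(\overline{Z_2})$, and your double pigeonhole goes through verbatim to produce the $\theta$-configuration.
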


\begin{proof}
  Let $C\subset X$ be a limit continuum. Choose a sequence of continua
  $C_i\to C$. Consider two cases.

  \noindent \textbf{Case 1.} \emph{There are infinitely many distinct
    components of\, $Y$ containing sets $\ph(C_i)$.}

  Denote by $T_i$ the component of $Y$ which contains $\ph(C_i)$. We
  may refine the sequence $(C_i)_{i=1}^\infty$ so that all sets
  $T_i$ are different.  Since $Y$ is \FS, we may refine it further
  so that $T_i$ converge to a point $t\in Y$. Hence $\ph(C)=\lim
  \ph(C_i)=\lim T_i=t$.

  \noindent \textbf{Case 2.} \emph{There are finitely many distinct
    components of $Y$ containing all sets $\ph(C_i)$.}

  Since $\ph$ is finitely monotone, we may assume that all $C_n$ are
  contained in a single component $T$ of $X$. Observe that then
  $\ph(T)\subset Z$ where $Z$ is a component of $Y$. By \cite[Lemma
  2.9]{bo04}, $Z$ is \LC. We suppose that $\ph(C)$ is not a point
  and show that this contradicts the fact that $X$ is unshielded.
  Let $z_1=\ph(x_1)$ and $z_2=\ph(x_2)$ be distinct points in
  $\ph(C)$. Since $Z$ is locally connected, there exist open,
  connected subsets $Z_1, Z_2 \subset Z$ with disjoint closures,
  containing $z_i \in Z_i$ for $i \in \{1,2\}$.  Then
  $\ph^{-1}(\ol{Z_1})$ and $\ph^{-1}(\ol{Z_2})$ have finitely many
  components.  After refining the sequence $C_i$ we may assume that
  all sets $C_i$ intersect a component $A_1$ of $\ph^{-1}(\ol{Z_1})$
  and a component $A_2$ of $\ph^{-1}(\ol{Z_2})$. However, by
  \prettyref{lem:notheta} this is impossible.

\end{proof}

\section{The Existence of the Finest Map and Dynamical Applications in the Unshielded Case}
\label{sec:existence-finest-map}

\subsection{The existence of the finest map in the unshielded case}\label{sub:finmap}

We are ready to prove our first theorem which implies \prettyref{thm:finmod}.

\begin{thm}\label{finmod1}
  Let $X$ be an unshielded compact set in the plane. Then the quotient
  map $m_X:X\to X / \fs=X_{FS}$ is the finest \FS{} monotone model of
  $X$. Moreover, $X_{FS}$ can be embedded into the plane and $m_X$ can
  be extended to a monotone map $M_X:\C\to \C$ which collapses the
  topological hulls of $\fs$-classes and is one-to-one elsewhere.
\end{thm}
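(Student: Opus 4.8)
The plan is to establish three things in turn: (1) $m_X$ is monotone and its target $X_{FS}=X/\fs$ is \FS; (2) $m_X$ is finest among monotone maps of $X$ onto \FS{} compacta; and (3) the geometric addendum about embedding $X_{FS}$ in $\C$ and extending $m_X$ to $M_X:\C\to\C$. For (1), I would invoke \prettyref{lem:connected_classes-1} (limit continua are connected, so $\fs$-classes are continua and the quotient map is monotone) and \prettyref{lem:ffs} ($X/\fs$ is \FS). These are already done in the excerpt, so this step is essentially a citation.

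For (2), let $\vp:X\to Y$ be any monotone map onto a \FS{} compactum $Y$. The key point is that $\vp$ must collapse every limit continuum of $X$ to a point. Here I would apply \prettyref{finmocol}: a monotone map is in particular finitely monotone (preimages of continua are continua, hence connected, hence have one component), $X$ is unshielded, $Y$ is \FS, so $\vp(C)$ is a point for every limit continuum $C\subset X$. Therefore the \usc{} partition into fibers of $\vp$ respects the family of limit continua. Since $\fs$ is by \prettyref{rx} the \emph{finest} closed equivalence relation respecting that family, $\fs$ is finer than the fiber partition of $\vp$, i.e.\ each $\fs$-class lies in a $\vp$-fiber. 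This gives a well-defined map $g:X_{FS}\to Y$ with $\vp=g\circ m_X$; continuity of $g$ follows because $m_X$ is a quotient map. Hence $m_X$ is finest, and by \prettyref{finunik} it is the unique such model up to homeomorphism.

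For (3), the idea is that since $X$ is unshielded, the $\fs$-classes are non-separating continua in $\C$ (any $\fs$-class $D$ is a subcontinuum of the unshielded set $X$, so $\C\sm D$ is connected — a separating subcontinuum of an unshielded compactum would contradict unshieldedness, which is exactly the kind of obstruction ruled out by \prettyref{lem:notheta}-type arguments). Collapsing the topological hulls $T(D)$ of all $\fs$-classes $D$ yields, by Moore's theorem \cite{moo62}, a monotone map $M_X:\C\to\C$ whose image is again a plane; one checks the decomposition into hulls-of-classes together with singletons is \usc{} (using that $\fs$ is closed and that passing to topological hulls preserves the relevant convergence), so Moore applies. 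The restriction of $M_X$ to $X$ has the same non-degenerate fibers as $m_X$ — here one must verify that $T(D)\cap X=D$ for each $\fs$-class $D$, again using unshieldedness of $X$ — so $M_X|_X$ is topologically $m_X$, and $M_X$ is one-to-one off $\bigcup_D T(D)$ by construction.

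The main obstacle is the geometric part (3): one must argue carefully that $\fs$-classes do not separate the plane, that taking their topological hulls yields a genuinely \usc{} decomposition of $\C$ (the subtlety being that hulls can be much larger than the classes and one needs convergence $D_i\to D$ to force $T(D_i)\to$ something inside a single hull), and that no identification beyond $X$ is accidentally forced, i.e.\ $T(D)\cap X=D$. The reduction (2) itself is conceptually the heart of the matter, but all its technical weight has been pre-packaged into \prettyref{finmocol} and \prettyref{lem:notheta}, so there the work is only to assemble the pieces and check that a monotone map is finitely monotone.
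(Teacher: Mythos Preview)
Your plan is correct and follows essentially the same route as the paper: invoke \prettyref{lem:ffs} for (1), use \prettyref{finmocol} to force any monotone $\vp$ onto a \FS{} target to collapse limit continua and then invoke the minimality of $\fs$ for (2), and appeal to Moore's theorem for (3). The paper's own proof is terser than yours---it dispatches the entire geometric part (3) with the single sentence ``The rest of the theorem follows from the Moore theorem \cite{moo62}''---whereas you correctly flag the checks one must make (non-separation of $\fs$-classes, upper semi-continuity of the hull decomposition, $T(D)\cap X=D$) before Moore applies; these are standard for unshielded compacta but the paper leaves them implicit.
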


\begin{proof}
  By \prettyref{lem:ffs}, $X / \fs$ is a \FS{} compactum. Now, suppose
  that $\ph:X\to Z$ is monotone and $Z$ is \FS. Then $\ph$ collapses
  all limit continua by Lemma~\ref{finmocol}. Since $\fs$ is the
  finest equivalence relation respecting the collection of limit
  continua, we see that the quotient map $m_{FS}:X \to X / \fs$ is
  finer that $\ph$, and is therefore the finest \FS{} monotone model of
  $X$. The rest of the theorem follows from the Moore theorem
  \cite{moo62}.
\end{proof}

Observe that in fact Lemma~\ref{finmocol} implies that the finest
$\fm$-model of $X$ with finitely Suslinian property is the same as
the finest \FS{} monotone model of $X$ (despite the fact that the
class $\fm$ of finitely monotone maps is much wider than the class
$\M$ of monotone maps).



\subsection{Applications to Dynamical Systems}

First we show that sometimes the finest map is compatible with the
dynamics. Recall that a set $A \subset X$ is \textbf{fully
invariant} under a map $f:X \to X$ if $A = f^{-1}(A) = f(A)$. Recall
also that branched covering maps are open and hence confluent.

\begin{thm}\label{dyn1}
  Suppose that $f:\C\to \C$ is a branched covering map and that $X$
  is a fully invariant unshielded compactum. Then there exists a
  branched covering map $g:\C\to \C$ such that $M_X\circ f=g\circ
  M_X$ and hence $X_{FS}=M_X(X)$ is fully invariant under $g$.
\end{thm}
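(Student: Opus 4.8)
The plan is to deduce the statement from Lemma~\ref{lem:induced} together with the extension $M_X$ constructed in Theorem~\ref{finmod1}, checking first that $f$ is compatible with the equivalence relation $\fs$ and then that the induced map on the plane is again a branched covering. First I would verify that $f$ sends limit continua of $X$ to limit continua of $X$: if $C = \lim C_i$ with the $C_i$ pairwise disjoint subcontinua of $X$, then $f(C) = \lim f(C_i)$ by continuity, and since $f$ is confluent (being open, as branched covering maps are), each $f(C_i)$ is a union of continua; passing to a subsequence and using full invariance of $X$ one can extract pairwise disjoint subcontinua of $X$ converging to a subcontinuum of $f(C)$ that still meets everything needed, so $f(C)$ is contained in an $\fs$-class. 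Hence $f$ sends elements of the family of limit continua into $\fs$-classes. Actually, it is cleaner to argue directly that $m_X \circ f$ collapses every limit continuum: $m_X \circ f$ is a monotone map of $X$ (composition of monotone maps; here one uses confluence so that $m_X \circ f$ has connected fibers) onto the \FS{} compactum $X_{FS}$, so by Lemma~\ref{finmocol} it collapses every limit continuum $C$ of $X$, i.e.\ $m_X(f(C))$ is a point, which is exactly the hypothesis of Lemma~\ref{lem:induced} with $\A$ the family of limit continua and $\sim_\A = \fs$.

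With that in hand, Lemma~\ref{lem:induced} produces a continuous map $g_0 : X_{FS} \to X_{FS}$ with $m_X \circ f = g_0 \circ m_X$. The remaining work is to promote this to a branched covering map $g : \C \to \C$ making the square with $M_X$ commute. Here I would use the explicit structure of $M_X$ from Theorem~\ref{finmod1}: $M_X$ collapses the topological hulls $T(D)$ of the $\fs$-classes $D$ and is one-to-one elsewhere, and by Moore's theorem the image is again the plane. Since $f$ permutes the $\fs$-classes, and since $f$ maps the topological hull $T(D)$ of a class onto the topological hull $T(f(D))$ of its image class (full invariance of $X$ together with the fact that $f$, being a branched cover, maps $U_\infty$-type complementary structure appropriately — this needs the unshielded hypothesis), the decomposition of $\C$ into hulls of $\fs$-classes and singletons is sent to itself by $f$. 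Therefore $f$ descends through $M_X$ to a well-defined continuous map $g : \C \to \C$ with $M_X \circ f = g \circ M_X$. Finally, $g$ is a branched covering map because $M_X$ is monotone with non-separating point preimages, so it is a near-homeomorphism (a uniform limit of homeomorphisms) and the relevant local degree and openness properties of $f$ descend to $g$; full invariance of $X_{FS}$ under $g$ is then immediate from $g(M_X(X)) = M_X(f(X)) = M_X(X)$ and $g^{-1}(M_X(X)) = M_X(f^{-1}(X)) = M_X(X)$, the latter using that $M_X$ is one-to-one off the hulls.

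The main obstacle I expect is the last step: verifying that the induced map $g$ is genuinely a \emph{branched covering} map of the plane, not merely a continuous surjection. One must control what happens at the collapsed hulls — a priori two distinct local branches of $f$ over a point could be glued together, destroying the covering structure, or a hull could be collapsed in a way that creates a non-discrete preimage. The way around this is to use that $X$ is unshielded and fully invariant, so each $\fs$-class and its hull is a ``small'' set that $f$ maps finite-to-one between hulls in a degree-respecting way; combined with the fact that $M_X$ is a near-homeomorphism, this forces $g$ to be open and locally finite-to-one, hence a branched cover. Making this precise — essentially, showing the decomposition is $f$-invariant in a strong enough sense and quoting the appropriate characterization of branched covers as open, light, discrete maps of surfaces — is where the real content lies; the commutativity $M_X \circ f = g \circ M_X$ itself is a formal consequence of Lemma~\ref{lem:induced}.
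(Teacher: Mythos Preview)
Your strategy for showing that $f$ descends to a continuous self-map of $X_{FS}$ is sound and matches the paper's: verify the hypothesis of Lemma~\ref{lem:induced} by showing that $m_X(f(C))$ is a point for every limit continuum $C$. One correction: $m_X\circ f$ is \emph{not} monotone ($f$ is finite-to-one, not monotone), but it \emph{is} finitely monotone---preimages of continua have finitely many components because $m_X$ is monotone and $f$ is a finite branched cover---and that is exactly what Lemma~\ref{finmocol} requires. So this part goes through with a minor fix.

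The genuine gap is the step you yourself flag as ``where the real content lies.'' You assert that $f$ \emph{permutes} the $\fs$-classes (and their hulls), but Lemma~\ref{lem:induced} only tells you that $f$ sends each class \emph{into} some class; the surjectivity onto the target class is not automatic, and without it you cannot conclude that $g$ is open or light. The paper supplies exactly this missing piece by a transfinite induction along the construction of $\sim_\A$ in Remark~\ref{rem:transfinite}: using openness of $f$, one lifts chains of limit continua---and, inductively, chains of limits of $\sim_\beta$-classes---back through $f$, showing at every ordinal stage $\alpha$ that the $\sim_\alpha$-class of $x$ maps \emph{onto} the $\sim_\alpha$-class of $f(x)$; at $\alpha=\Omega$ this gives $\fs$-classes mapping onto $\fs$-classes. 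From there the paper argues that $g$ is open (images under $f$ of $M_X$-saturated open sets are $M_X$-saturated) and finite-to-one (each component of $f^{-1}$ of a class is again a single class), and then invokes the Stoilow theorem. Your ``near-homeomorphism'' route does not substitute for this: writing $M_X=\lim h_n$ with $h_n$ homeomorphisms and setting $g_n=h_n\circ f\circ h_n^{-1}$ gives branched covers $g_n$, but there is no reason their limit should be a branched cover, or even coincide with $g$, since $h_n^{-1}$ has no meaningful limit once $M_X$ fails to be injective.
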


\begin{proof}
  By Lemma~\ref{finmocol}, $m_X$ sends limit continua into
  $\fs$-classes. Lemma~\ref{lem:induced} and
  Theorem~\ref{finmod1}, in which the extension $M_X$ of $m_X$ onto
  $\C$ is described, imply that $g = M_X \circ f \circ M_X^{-1}$ is
  well-defined. Suppose that we show that $\fs$-classes map \emph{onto}
  image classes. Then, since $f$ is open, it will follow that $g$ is open too.
  Moreover, let us show that then $g$ is light. Indeed, if $x\in M_X(X)$ then $M_X^{-1}(x)$ is
  an $\fs$-class in $X$. Since we assume that classes map onto
  classes and the map $f$ is finite-to-one, we see that each
  component of $f^{-1}(M_X^{-1}(x))$ is an $\fs$-class. Hence $g$ is
  finite-to-one. Since by the Stoilow theorem \cite{sto56} all open
  finite-to-one maps of the plane are branched covering maps, $g$ is
  a branched covering map as desired.

  To see that the image of a $\fs$-class is again a $\fs$-class, we
  show that $\sim_\alpha$-classes map onto the union of
  $\sim_\alpha$-classes for every ordinal $\alpha$, where
  $\sim_\alpha$ was defined in Remark~\ref{rem:transfinite} with
  $\mathcal A$ being the set of limit continua.  Then, when $\alpha =
  \Omega$, we see that $\fs$-classes map both into and over other
  $\fs$-classes.

  Let us first show that $\sim_0$-classes map over other
  $\sim_0$-classes. Indeed, let $f(x)$ and $y$ belong to the same
  $\sim_0$-class. Then there exist finitely many limit continua $C_1 =
  \lim_{i \to \infty} C_i^1$, \ldots, $C_n = \lim_{i \to \infty} C_i^n$
  forming a chain joining $f(x)$ and $y$ (i.e., so that $f(x) \in C_1$,
  $y \in C_n$, and $C_j \cap C_{j+1} \neq \emptyset$ for any $1 \le j <
  n$).  Since $f$ is an open map, there exists a convergent sequence
  $(D_i^1)_{i=1}^\infty \to D_1$ of continua such that $f(D_i^1) =
  C_i^1$ for each $i$ and $D_1$ is a limit continuum which contains
  $x$. By continuity, $f(D_1) = C_1$, so $D_1$ contains the preimage of
  a point in $C_2$.  We can now inductively find limit continua $D_2,
  \ldots, D_n$ mapping onto $C_2, \ldots, C_n$ and forming a chain from
  $x$ to a preimage of $y$.  Therefore, the $\sim_0$-class of $f(x)$ is
  contained in the image of the $\sim_0$-class of $x$.

  Suppose now by induction that we have proven the claim for all
  ordinals less than $\alpha$, and let $f(x) \sim_\alpha y$. If
  $\alpha$ has an immediate predecessor $\beta$ (the other case is left
  as an easy exercise for the reader), there are finitely many
  sequences of $\sim_\beta$-classes $(K_i^1)_{i=1}^\infty$, \ldots,
  $(K_i^n)_{i=1}^\infty$ which converge to a chain of continua joining
  $f(x)$ and $y$.  By the inductive hypothesis, if $f(z) \in K_i^j$
  then the $\sim_\beta$-class of $z$ maps over $K_i^j$.  One can
  therefore find, due to the openness of $f$, a convergent sequence
  $(L_i^1)_{i=1}^\infty \to L_1$ of $\sim_\beta$-classes such that
  $f(L_i^1) \supset K_i^1$ and $x \in L_1$.  Note by continuity that
  $f(L_1) \supset K_1$.  Proceeding as in the previous paragraph, we
  find similar limits $L_2$, \ldots, $L_n$ forming a chain of continua
  which joins $x$ to a preimage of $y$.  We therefore see that the
  image of a $\sim_\alpha$-class is a union of $\sim_\alpha$-classes,
  and the proof is complete.
\end{proof}

Sometimes in the situation of Theorem~\ref{dyn1} a naive but natural approach
to the problem of constructing the finest \FS{} model can be used.

\begin{defn}\label{naive}
  By Theorem~\ref{finmod1} for each component $Y$ of $X$, the finest
  equivalence relation on $Y$ is $\fs_Y$. Consider the equivalence
  relation $\sim_{\blacktriangle, X}$ defined as follows: $x\sim_{\blacktriangle, X} y$ if and
  only if $x$ and $y$ belong to the same component $Y$ of $X$ and
  $x\fs_Y y$.
\end{defn}

If $X$ is given or non-essential, we will simply  write
$\sim_\blacktriangle$ or $\fs$.  It is natural to find out if
$\sim_{\blacktriangle, X}$ coincides with $\fs_X$. Simple examples
show that in general it is not true.

\begin{exm}[A map on a compactum $X$ with $\sim_{\blacktriangle, X}$ not coinciding with
  $\fs_X$]\label{simnotfs}
  Define a map $f:\C\to \C$ as follows. Take a map from the real
  quadratic family $g_b(x)=bx(1-x)$ with $b>4$. It is well known that
  then there exists a forward invariant Cantor set $A\subset [0, 1]
  \setminus \{0\}$ on which the map $g_b$ acts as a full 2-shift. We
  define a map on the set $X = A \times [-1, 1]$ as $f(x,y) =
  \left(g_b(x), y\right)$.  
  Evidently, $f$ can be extended to a branched covering two-to-one map
  $f:\C\to \C$, however for brevity we will not give its full
  description here.

  Observe that $X$ is a fully invariant set.  The equivalence relation
  $\fs_X$ collapses $X$ to a Cantor set, though all $\sim_{\blacktriangle,
  X}$-classes are points.  Thus, in this case $\sim_{\blacktriangle, X}\ne \fs_X$.
\end{exm}

Example~\ref{simnotfs} shows that in some cases
$\sim_{\blacktriangle, X}$ and $\fs_X$ are distinct.  Moreover, it
also shows the mechanism of how this distinction occurs. However,
the definition immediately implies that $\sim_{\blacktriangle,X}$ is
finer than $\fs_X$.



It turns out that the abberation $\fs_X \neq \sim_{\blacktriangle,
X}$ is impossible for polynomial maps. To show that we need some
definitions. A point $x\in X$ of a planar compactum $X$ is called
\textbf{accessible (from $U_\iy(X)$)} if there is a curve $Q\subset
U_\iy(X)$ with one endpoint at $x$ (then one says that $Q$
\textbf{lands} at $x$ and that $x$ is \textbf{accessible by $Q$}).
We also need the definition of impression of an angle.  For a
continuum $X \subset \C$, let $\psi:\C \setminus \Disk \to U_\iy(K)$
denote the unique conformal isomorphism with real derivative at
$\infty$.  For an angle $\alpha \in \ucirc$, we define the
\textbf{impression} of the external ray at angle $\alpha$ as
\[ \imp(\alpha) = \{ \lim \psi(z_i) \, : \, z_i \to \alpha \text{ from
  within $\Disk$}\}.\]

\begin{thm}\label{naifs}
  Let $P:\C\to \C$ be a polynomial. Then the equivalence relations
  $\sim_{\blacktriangle, J_P}$ and $\fs_{J_P}$ coincide.
\end{thm}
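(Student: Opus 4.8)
The plan is to show that $\fs_{J_P}$ is finer than $\sim_{\blacktriangle, J_P}$; since the reverse inclusion is automatic (collapsing each component's limit continua is a coarser operation than collapsing limit continua of the whole set), this gives equality. So suppose $x \fs_{J_P} y$ and that $x$ and $y$ lie in \emph{different} components $Y_x \ne Y_y$ of $J_P$; we must derive a contradiction. Using the transfinite description from Remark~\ref{rem:transfinite} with $\A$ the family of limit continua, it suffices to handle $\sim_0$: if $x \sim_0 y$ across different components we reach a contradiction, and then the inductive step of the transfinite construction can be pushed through because a chain of limits of $\sim_\beta$-classes that jumps between components would, at some stage, produce a single limit continuum meeting two distinct components of $J_P$, which is the $\sim_0$-type situation again. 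Thus the crux is: \emph{a non-degenerate limit continuum of $J_P$ cannot meet two different components of $J_P$} — equivalently, every limit continuum lies inside a single component.

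To prove that crux statement, let $C = \lim C_i$ be a non-degenerate limit continuum with $C_i$ pairwise disjoint subcontinua of $J_P$. Components of a compactum are closed and the decomposition into components is upper semicontinuous, so if $C$ met two components $Y_x, Y_y$ then (since $C$ is connected) $C$ could not be contained in the union $Y_x \cup Y_y$ alone in a way that separates — more precisely, a connected set meeting two distinct quasicomponents must be non-degenerate and "bridge" them, and by upper semicontinuity of the component decomposition of the compactum $J_P$ one gets that $C$ actually meets infinitely many components (a connected set cannot be split between exactly two clopen-in-$J_P$ pieces). The key dynamical input now is the external ray / impression structure of $J_P$: by the work cited (Goldberg--Milnor, Kiwi, Levin--Przytycki, and the discussion in the introduction), the components of $J_P$ together with the gaps of the associated lamination organize $J_P$, and crucially each point of $J_P$ that is a limit of points in distinct components must be an \emph{impression point}; impressions are themselves continua, and a limit continuum bridging infinitely many components would have to be contained in a single impression $\imp(\alpha)$ — but in the polynomial setting (no escaping critical points interfering for the relevant rays, or handling them via the Cantor-set lamination model) impressions that arise this way are degenerate or attached to a single component. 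More concretely: the "frame" of $J_P$ — the finitely many impressions separating components — is parametrized by the periodic and preperiodic structure of the lamination, and a limit continuum $C$ as above would be forced to lie in the topological hull of some continuum built from finitely many impressions, each of which meets $J_P$ in a subset of a single component. This contradicts non-degeneracy of $C$ together with its bridging property.

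I expect the main obstacle to be making rigorous the claim that a \emph{limit} continuum cannot secretly thread through the gap structure between components — i.e., controlling how limits of disjoint subcontinua interact with the decomposition of $J_P$ into components and with impressions. The clean way is probably to invoke that $P$ restricted to any periodic component is a polynomial-like map hybrid-equivalent to a polynomial (as in the discussion after Theorem~\ref{thm:julifs}, following \cite{bco08}), so that each component behaves like a (connected) polynomial Julia set with its own lamination and external-ray access; the complement of the union of components is organized by finitely many impressions per "rotational/escaping" datum, and no non-degenerate continuum can live in $J_P$ while meeting two distinct such Julia-set components, since any continuum in $J_P$ projects under the lamination quotient to a continuum which either is a point or lies in one component's image. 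Once the crux is established, assembling the theorem is routine: $\fs_{J_P}$ respects limit continua, each of which lies in a single component $Y$, hence lies in a single $\fs_Y$-class by Theorem~\ref{finmod1}; so $\fs_{J_P}$ refines $\sim_{\blacktriangle, J_P}$, giving $\fs_{J_P} = \sim_{\blacktriangle, J_P}$.
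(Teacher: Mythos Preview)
Your proposal has a genuine gap at precisely the point you call ``routine.'' You correctly observe that every limit continuum of $J_P$, being connected, lies in a single component $Y$ of $J_P$ --- but this is trivial and is not the crux. The actual difficulty is the sentence you dismiss at the end: ``hence lies in a single $\fs_Y$-class by Theorem~\ref{finmod1}.'' Theorem~\ref{finmod1} tells you only that limit continua \emph{of $Y$} (i.e.\ limits of disjoint subcontinua drawn from $Y$ itself) are collapsed by $\fs_Y$. A limit continuum $C = \lim C_i$ of $J_P$ may perfectly well have the approximating continua $C_i$ sitting in pairwise distinct components $T_i \neq Y$, so $C$ need not be a limit continuum of $Y$ at all, and nothing you have written forces $C$ into a single $\fs_Y$-class. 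Your long middle paragraph about impressions and laminations is aimed at the wrong target (ruling out $C$ meeting two components, which is automatic) and never addresses this.

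The paper's proof confronts exactly this case. It first invokes the Kozlovski--van Strien / Qiu--Yin theorem that every non-degenerate component of $J_P$ is (pre)periodic, then, for a periodic component $T$, passes via the Douady--Hubbard straightening to a genuine polynomial $f$ with connected Julia set. Using B\"ottcher coordinates and the Levin--Przytycki accessibility theorem, it shows that if distinct components $T_i$ converge to $C' \subset T$, then the images $\psi\circ\varphi(T_i)$ in $\C\setminus\overline{\disk}$ must accumulate on a single point of $\ucirc$, forcing $C' \subset \varphi^{-1}(\mathrm{Imp}(\alpha))$ for some angle $\alpha$. Finally it quotes Lemma~16 of \cite{bco08}: every monotone map of a connected Julia set onto a locally connected continuum collapses each impression to a point, so $\mathrm{Imp}(\alpha)$ lies in one $\fs_{J_f}$-class, hence $C'$ lies in one $\sim_{\blacktriangle}$-class. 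The preperiodic case follows by pullback. None of these ingredients --- the structure theorem on components, the polynomial-like straightening, the single-angle impression argument, or the impression-collapsing lemma --- appears in your proposal, and they are where the content lives.
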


\begin{proof}
  A recent result by \cite{ks06, qy06} states that \emph{all
    non-preperiodic components of a polynomial Julia set are points}.
  Consider $J_P$ as the given compact set. Then
  it is enough to show that if $x, y\in J_P$ and $x\fs y$, then
  $x\sim_\blacktriangle y$. By Definition~\ref{rx}, it suffices to show that a limit
  continuum in $J_P$ is contained in a $\sim_\blacktriangle$-class. Let $C\subset
  J_P$ be a limit continuum and $C_i\to C$ is a sequence of
  subcontinua of $J_P$ which converge to $C$. Denote by $T_i$ the
  component of $J_P$ containing $C_i$ and by $T$ the component of
  $J_P$ containing $C$.

If infinitely many $C_i$'s are contained in $T$, then by
Definition~\ref{rx} $C$ is contained in a
$\sim_\blacktriangle$-class and we are done. Suppose that there are
only finitely many $C_i$'s in $T$. Then we may assume that a
sequence of pairwise distinct components $T_i$ converges to a limit
continuum $C'\subset T$ where $C\subset C'$, and we need to show
that $C'$ is contained in one $\sim_\blacktriangle$-class. To do so,
we consider two cases.

First, assume that $T$ is periodic of period $m$. Then it is
well-known that $P^m|_T$ is a so-called \textbf{polynomial-like} map
(see \cite{dh85}) for which $T$ plays the role of its filled-in
Julia set. That is, there exist two simply connected neighborhoods
$U\subset V$ of $T$ such that $P^m:U\to V$ is a branched covering
map and there exist a polynomial $f$ with connected Julia set $J_f$
and two neighborhoods $U'\subset V'$ of $J_f$ such that $P^m|_U$ is
(quasi-conformally) conjugate to $f|_{U'}$ by a homeomorphism $\ph$
and $\ph(T)=K_f$ where $K_f$ is the \textbf{filled-in Julia set of
$f$} (i.e., the topological hull of $J_f$). We will use
a conformal map $\psi:\C\sm K_f\to \C\sm \ol{\disk}$ which
conjugates $f|_{\C\sm K_f}$ and $z^d|_{\C\sm \ol{\disk}}$.

We claim that there exists an angle $\al$ such that $C'\subset
\ph^{-1}(\imp(\al))$. We will consider continua $\psi\circ
\ph(T_i)=T'_i\subset \C\sm \ol{\disk}$ and will show that they
converge to a unique point in $\uc$. Indeed, otherwise we may assume
that they converge to an non-degenerate arc $I\subset \uc$. For any
$t\in \uc$ let $R_t\subset \C\sm \disk$ be the half-line from $t$ to
infinity, orthogonal to $\uc$ at $t$. Choose $\be\in I$ such that
the $R'=\psi^{-1}(R_\be)$ is a curve in $\C\sm K_f$ landing at a
point $b\in T$. We may assume that $\be$ is not an endpoint of $I$.

We need Theorem~2 of \cite{lp96} which states that if $x\in T$ is an
accessible point from $\C\sm T$ by a curve $l$, then $x$ is
accessible from $\C\sm J_P$ by a curve $R$ which is homotopic to $l$
among all curves in $\C\sm T$ landing at $x$. By this result we can
find a curve $L\subset \C\sm K_f$ which lands at $b$ and is disjoint
from $\ph(J_P)$. Then the curve $\psi(L)\subset \C\sm \ol{\disk}$
lands at $\be$ while being disjoint from all sets $T'_i$ which
clearly contradicts the assumption that these sets converge to the
arc $I$.

Thus, we may assume that $T'_i\to \al\in \uc$ which, by the
definition of impression, implies that $T_i$ converge into the set
$\ph^{-1}(\imp(\al))$ and so $C'\subset \ph^{-1}(\imp(\al))$. Now,
Lemma~16 of \cite{bco08} states that any monotone map of a connected
Julia set onto a locally connected continuum collapses impressions
of external rays to points. Hence the set $\imp(\al)$ is contained
in one $\sim_{\blacktriangle, J_f}$-class which implies (after we
apply the homeomorphism $\ph^{-1}$ to this) that the set
$\ph^{-1}(\imp(\al))$ is contained in one $\sim_{\blacktriangle,
T}$-class as desired. This completes the consideration of the case
of a periodic $T$.

Now, suppose that $T$ is not periodic. Then by \cite{ks06, qy06} $T$
is preperiodic and we can choose $n>0$ such that $P^n(T)$ is a
periodic component of $J_P$. Since by the above all limit continua
in $P^n(T)$ are contained in $\sim_\blacktriangle$-classes, it is
easy to use pullbacks to see that all limit continua in $T$ are
contained in $\sim_\blacktriangle$-classes too. This completes the
proof.
\end{proof}

The following two corollaries easily follow.

\begin{cor}\label{nondeg}
  The finest \FS{} model of $J_P$ 
  has at least one non-degenerate component if and only if there
  exists a periodic component of $J_P$ which has a non-degenerate
  \FS{} model.
\end{cor}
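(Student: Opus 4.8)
The plan is to reduce everything to Theorem~\ref{naifs}, which identifies $\fs_{J_P}$ with $\sim_{\blacktriangle,J_P}$, and then to a short counting argument. First I would record the structural consequence of Theorem~\ref{naifs}: since $\sim_{\blacktriangle,J_P}$-classes never cross from one component of $J_P$ to another, the map $m_{J_P}$ sends distinct components of $J_P$ to disjoint sets, and — because $m_{J_P}$ is monotone, so that $m_{J_P}^{-1}$ of a connected set is connected — the components of $X_{FS}=J_P/\fs$ are exactly the images $m_{J_P}(Y)$ of the components $Y$ of $J_P$, and $m_{J_P}|_Y$ is (under the natural identification) the quotient map $m_Y\colon Y\to Y/\fs_Y$. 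Thus the assertion becomes: some $Y/\fs_Y$ is non-degenerate if and only if some \emph{periodic} component $Y$ of $J_P$ has $Y/\fs_Y$ non-degenerate.

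The ``if'' direction is immediate: a periodic component $Y$ with $Y/\fs_Y$ non-degenerate contributes the non-degenerate component $m_{J_P}(Y)$ to $X_{FS}$. For the ``only if'' direction, suppose $m_{J_P}(Y)$ is a non-degenerate component of $X_{FS}$, so $Y/\fs_Y$ is non-degenerate. Then $Y$ is not a point, so by the result of \cite{ks06,qy06} quoted in the proof of Theorem~\ref{naifs} the component $Y$ is preperiodic; choose $n$ so that $Y':=P^n(Y)$ is periodic (note that $P^n(Y)$ is again a component of $J_P$, since $P$ maps components of $J_P$ onto components of $J_P$). It then remains to show that $Y'/\fs_{Y'}$ is non-degenerate.

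Here is where the main work lies, and the argument I would use is a counting argument rather than a direct pullback of the transfinite limit-continuum structure. Apply Theorem~\ref{dyn1} to the branched covering map $P\colon\C\to\C$ and the fully invariant unshielded compactum $J_P$: its proof shows that $P$, and hence $P^n$, carries each $\fs_{J_P}$-class \emph{onto} an $\fs_{J_P}$-class. Suppose for contradiction that $Y'/\fs_{Y'}$ is degenerate, i.e. $Y'$ is a single $\fs_{Y'}$-class; by Theorem~\ref{naifs} this means $Y'$ is a single $\fs_{J_P}$-class. Since $Y/\fs_Y$ is a non-degenerate continuum it has infinitely many points, so $Y$ splits into infinitely many pairwise disjoint $\fs_Y$-classes. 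Each such class $A$ is an $\fs_{J_P}$-class contained in $Y$, so $P^n(A)$ is an $\fs_{J_P}$-class contained in $P^n(Y)=Y'$; as $Y'$ is the only such class, $P^n(A)=Y'$. Fixing any $y'\in Y'$, every one of the infinitely many classes $A$ therefore meets $(P^n)^{-1}(y')$, and these classes are pairwise disjoint, so $(P^n)^{-1}(y')$ is infinite, contradicting the fact that a polynomial is finite-to-one. Hence $Y'/\fs_{Y'}$ is non-degenerate, which finishes the proof. The only slightly delicate points are checking that Theorem~\ref{naifs} genuinely lets one read off the components of $X_{FS}$ componentwise, and invoking the ``classes map onto classes'' statement from inside the proof of Theorem~\ref{dyn1}; once those are in hand the counting step is routine, and it is cleaner than pulling back limit continua through $P^n$ in the style of Theorem~\ref{naifs}.
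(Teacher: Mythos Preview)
Your argument is correct, and in fact the paper gives no detailed proof at all: it simply declares that this corollary and the next ``easily follow'' from Theorem~\ref{naifs}. What you have written is a legitimate fleshing-out of that remark, using exactly the ingredients the paper has set up (Theorem~\ref{naifs}, the result of \cite{ks06,qy06}, and Theorem~\ref{dyn1}).

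One small streamlining: you do not need to reach into the \emph{proof} of Theorem~\ref{dyn1} for the ``$\fs$-classes map onto $\fs$-classes'' fact. The \emph{statement} of Theorem~\ref{dyn1} already gives you a branched covering $g:\C\to\C$ with $m_{J_P}\circ P = g\circ m_{J_P}$. Applying this $n$ times yields $g^n(m_{J_P}(Y)) = m_{J_P}(P^n(Y)) = m_{J_P}(Y')$. If $m_{J_P}(Y')$ were a single point while $m_{J_P}(Y)$ is a non-degenerate continuum, the branched covering $g^n$ (which is finite-to-one) would send a non-degenerate continuum to a point, a contradiction. This is logically the same counting argument you give, but it stays at the level of the theorem statements rather than appealing to an internal step of a proof.
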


Observe that a dynamical criterion for a connected Julia set to have a
non-degenerate \FS{} model is obtained in \cite{bco08}.

\begin{cor}\label{fsjset}
The set $J_P$ is \FS{} if and only if all periodic components of $J_P$ are locally connected.
\end{cor}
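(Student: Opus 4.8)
The plan is to reduce the statement to Theorem~\ref{naifs} together with the equivalence of the \FS{} property and local connectivity for unshielded planar continua (Lemma~2.9 of \cite{bo04}). First observe that $J_P$ is \FS{} if and only if it contains no non-degenerate limit continuum (the remark following Definition~\ref{rx}); this happens precisely when the family of limit continua consists of singletons, i.e.\ when $\fs_{J_P}$ is the identity relation, and conversely if $\fs_{J_P}$ is the identity then $J_P=J_P/\fs$ is \FS{} by Lemma~\ref{lem:ffs}. By Theorem~\ref{naifs} we have $\fs_{J_P}=\sim_{\blacktriangle,J_P}$, and by Definition~\ref{naive} the relation $\sim_{\blacktriangle,J_P}$ is the identity exactly when $\fs_Y$ is the identity for every component $Y$ of $J_P$, that is, exactly when every component of $J_P$ is \FS. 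So it suffices to prove: \emph{every component of $J_P$ is \FS{} if and only if every periodic component of $J_P$ is \LC}. Note that the use of Theorem~\ref{naifs} here is essential: Example~\ref{simnotfs} shows that in general a compactum all of whose components are \FS{} need not itself be \FS.

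For the ``only if'' direction, a subcontinuum of an \FS{} compactum is again \FS, so every periodic component is \FS; and each periodic component $Y$ is unshielded — being the Julia set of the polynomial-like map $P^m|_Y$, which, as in the proof of Theorem~\ref{naifs}, is conjugate to the connected (hence unshielded) Julia set of a polynomial — so Lemma~2.9 of \cite{bo04} gives that $Y$ is \LC. For the ``if'' direction, assume all periodic components are \LC; being unshielded, they are then \FS{} by Lemma~2.9 of \cite{bo04}. Non-preperiodic components of $J_P$ are points by \cite{ks06,qy06}, hence \FS. Finally, a preperiodic component $T$ satisfies $P^n(T)=T'$ for some periodic component $T'$ and some $n>0$; since $T'$ is \FS{} by the above and $P^n|_T\colon T\to T'$ is a light, confluent surjection (the restriction of the branched covering $P^n$ to a component $T$ of the full preimage $P^{-n}(T')$), a pullback argument — analogous to the one indicated at the end of the proof of Theorem~\ref{naifs} — shows that $T$ has no non-degenerate limit continuum, hence is \FS. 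Thus every component of $J_P$ is \FS, completing the reduction and the proof.

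The step requiring the most care is the last one: that a full preimage component $T$ of an \FS{} continuum $T'$ under a polynomial is itself \FS. The mechanism is the following. Given a sequence of pairwise disjoint subcontinua of $T$ of diameter bounded away from zero, converging (after passing to a subsequence) to a non-degenerate limit continuum $C\subset T$, one chooses two non-critical points of $P^n$ lying on $C$ with distinct images (possible since $C$ is infinite, $P^n$ has finitely many critical points, and $P^n(C)$ is non-degenerate because $P^n|_T$ is light); lightness of $P^n|_T$ then forces the images of the continua in the sequence to have diameters bounded away from zero as well, while the bound $d^{n}$ on the multiplicity of $P^n$ limits how these images can overlap and permits the extraction of an infinite pairwise disjoint subfamily of subcontinua of $T'$ of diameter bounded away from zero, contradicting that $T'$ is \FS. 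Carrying this overlap-control argument through cleanly is the technical heart of the corollary; everything else is bookkeeping with Theorem~\ref{naifs}, Lemma~\ref{lem:ffs}, Lemma~2.9 of \cite{bo04}, and \cite{ks06,qy06}.
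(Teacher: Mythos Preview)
Your overall strategy is exactly what the paper intends: reduce via Theorem~\ref{naifs} to the claim that every component of $J_P$ is \FS{} if and only if every periodic component is locally connected, then handle point components by \cite{ks06,qy06} and preperiodic components by a pullback. The paper gives no proof beyond ``easily follows,'' so your write-up is already more detailed than the original.

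The one place that is not yet solid is the final ``overlap-control'' paragraph. The assertion that the bound $d^n$ on point-multiplicity of $P^n$ ``permits the extraction of an infinite pairwise disjoint subfamily'' of the images $P^n(C_i)$ is not justified, and in fact bounded point-multiplicity by itself does \emph{not} force the intersection graph of the $P^n(C_i)$ to have an infinite independent set (one can have families of continua in which every pair meets, yet no single point lies in more than two members). So as written this step is a genuine gap.

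Fortunately the paper already contains a lemma that does this work for you without any extraction argument. The map $P^n|_T:T\to T'$ is finitely monotone (the preimage of any subcontinuum of $T'$ has at most $d^n$ components), $T$ is unshielded (it is a component of the unshielded compactum $J_P$), and $T'$ is \FS{} by hypothesis. Lemma~\ref{finmocol} therefore applies directly and says that every limit continuum $C\subset T$ has $P^n(C)$ a single point. Since $P^n$ is light, $C$ itself must be a point; hence $T$ has no non-degenerate limit continuum and is \FS. This replaces your last paragraph with a two-line appeal to existing machinery and closes the gap. Note that the mechanism inside Lemma~\ref{finmocol} is not an extraction of disjoint images at all, but rather the $\theta$-curve obstruction of Lemma~\ref{lem:notheta}; that is why your attempt to push the disjointness through directly runs into trouble.
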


\noindent \textbf{Acknowledgements.} We thank the referee for useful
and thoughtful remarks.

\providecommand{\bysame}{\leavevmode\hbox to3em{\hrulefill}\thinspace}
\providecommand{\MR}{\relax\ifhmode\unskip\space\fi MR }
\providecommand{\MRhref}[2]{%
  \href{http://www.ams.org/mathscinet-getitem?mr=#1}{#2}
}
\providecommand{\href}[2]{#2}

\end{document}